\numberwithin{equation}{section}
\numberwithin{figure}{section}
\theoremstyle{plain}
\newtheorem{thm}{\protect\theoremname}[section]
  \theoremstyle{plain}
  \newtheorem{lem}[thm]{\protect\lemmaname}
  \theoremstyle{plain}
  \newtheorem{prop}[thm]{\protect\propositionname}
  \theoremstyle{remark}
  \theoremstyle{plain}
  \newtheorem{cor}[thm]{\protect\corollaryname}
\newtheorem{theorem}{Theorem}[section]
\newtheorem{exa}[theorem]{Example}
\providecommand{\lemmaname}{Lemma}
  \providecommand{\propositionname}{Proposition}
  \providecommand{\remarkname}{Remark}
\providecommand{\theoremname}{Theorem}
\providecommand{\lemmaname}{Lemma}
  \providecommand{\propositionname}{Proposition}
  \providecommand{\remarkname}{Remark}
\providecommand{\theoremname}{Theorem}
\providecommand{\corollaryname}{Corollary}
  \providecommand{\lemmaname}{Lemma}
  \providecommand{\propositionname}{Proposition}
  \providecommand{\remarkname}{Remark}
\providecommand{\theoremname}{Theorem}
  \providecommand{\corollaryname}{Corollary}
  \providecommand{\lemmaname}{Lemma}
  \providecommand{\propositionname}{Proposition}
  \providecommand{\remarkname}{Remark}
\providecommand{\theoremname}{Theorem}
\def\z{
\par
%\noindent X\dotfill X
\[
\phi(x)=\left(\left(1+\left|\alpha \right|_{\ell_1}  \right) x(1),\ldots,\left(1+\left|\alpha \right|_{\ell_1}  \right)x(n),x(n+1)-\sum_{i=1}^{n}\alpha(i)x(i),x(n+2)-\sum_{i=1}^{n}\alpha(i)x(i),\ldots \right).
\]}
\begin{document}
%\begin{frontmatter}

\title[Stability constants of the weak$^*$-fpp for the space $\ell_1$]{Stability constants of the weak$^*$ fixed point property for the space $\ell_1$ }

\author[E. Casini]{Emanuele Casini}
\address{\textsc{Emanuele Casini}, \rm{Dipartimento di Scienza e Alta Tecnologia, Universit\`{a} dell'Insubria,
via Valleggio 11, 22100 Como, Italy.}}
\email{emanuele.casini@uninsunbria.it}

\author[E. Miglierina]{Enrico Miglierina}
\address{\textsc{Enrico Miglierina}, \rm{Dipartimento di Discipline Matematiche, Finanza Matematica ed Econometria,
Universit\`{a} Cattolica del Sacro Cuore, Via Necchi 9, 20123 Milano,
Italy.}}
\email{enrico.miglierina@unicatt.it}

\author[\L. Piasecki]{\L ukasz Piasecki}
\address{\textsc{\L ukasz Piasecki}, \rm{Instytut Matematyki,
Uniwersytet Marii Curie-Sk{\l}odowskiej, Pl. Marii Curie-Sk{\l}odowskiej 1, 20-031 Lublin, Poland.}}
\email{piasecki@hektor.umcs.lublin.pl}

\author[R. Popescu]{Roxana Popescu}
\address{\textsc{Roxana Popescu}, \rm{Department of Mathematics, University of Pittsburgh, Pittsburgh, PA 15260,
	USA.}}
\email{rop42@pitt.edu}

%\tnotetext[t1]{\textbf{Acknowledgements}: The authors thank
%Stanis{\l}aw Prus for helpful conversations on and around the
%article \cite{Japon-Prus2004}. They also thank Libor Vesel\'{y} for
%useful discussions.}
\date{\today}

\numberwithin{equation}{section}

\begin{abstract}
The main aim of the paper is to study some quantitative aspects of the stability of the weak$^*$ fixed point property for nonexpansive maps in $\ell_1$ (shortly, $w^*$-fpp). We focus on two complementary approaches to this topic. First, given a predual $X$ of $\ell_1$ such that the $\sigma(\ell_1,X)$-fpp holds, we precisely establish how far, with respect to the Banach-Mazur distance, we can move from $X$ without losing the $w^*$-fpp. The interesting point to note here is that our estimate depends only on the smallest radius of the ball in $\ell_1$ containing all $\sigma(\ell_1,X)$-cluster points of the extreme points of the unit ball. Second, we pass to consider the stability of the $w^*$-fpp in the restricted framework of preduals of $\ell_1$. Namely, we show that every predual $X$ of $\ell_1$ with a distance from $c_0$ strictly less than $3$, induces a weak$^*$ topology on $\ell_1$ such that the $\sigma(\ell_1,X)$-fpp holds.

\end{abstract}

\keywords{weak$^*$ fixed point property, stability of weak$^*$ fixed point property, Lindenstrauss spaces, $\ell_1$ space, renorming}

%% keywords here, in the form: keyword \sep keyword

%% PACS codes here, in the form: \PACS code \sep code

%% MSC codes here, in the form: \MSC code \sep code
%% or \MSC[2008] code \sep code (2000 is the default)
\subjclass[2010]{47H10, 46B45, 46B25}
%\end{keywords}

%\end{frontmatter}

\maketitle
\section{Introduction}

Let $X$ be an infinite dimensional real Banach space and let us denote by $B_X$ its closed unit
ball and by $S_X$ its unit sphere.
A nonempty bounded closed and convex subset $C$ of $X$ has the fixed point property
(shortly, fpp) if each nonexpansive mapping (i.e., the mapping $T:C\rightarrow C$ such that $\|T(x)-T(y)\|\leq \|x-y\|$ for all $x,y\in C$) has a fixed point.
A dual space $X^*$ is said to have the $\sigma(X^*,X)$-fixed
point property ($\sigma(X^*,X)$-fpp or, shortly, $w^*$-fpp when no confusion can arise) if every nonempty, convex,
$\sigma(X^*,X)$-compact subset $C$ of $X^*$ has the fpp. 
%Some papers are devoted to establish sufficient conditions to ensure that $\sigma(X^*,X)$-fpp holds (see, e.g., \cite{van_Dulst1982,Karlovitz1976,Khamsi1989,Sims1992}). 

The study of the
$\sigma(X^*,X)$-fpp reveals to be of special interest whenever a
dual space has different preduals. 
%Indeed, the behaviour with
%respect to the $\sigma(X^*,X)$-fpp of a given dual space can be
%completely different if we consider two different preduals.
For
instance, this situation occurs when we consider the space $\ell_1$
and its preduals $c_0$ and $c$ where it is well-known (see
\cite{Karlovitz1976}) that $\ell_1$ has the $\sigma(\ell_1,c_0)$-fpp
whereas it lacks the $\sigma(\ell_1,c)$-fpp.
Necessary and sufficient conditions for a predual $X$ of $ \ell_1$ to be a space such that the $\sigma(\ell_1,X)$-fpp holds are proved in \cite{Casini-Miglierina-Piasecki2015}.
The present paper concerns the investigation of the stability of the $\sigma(\ell_1,X)$-fpp carrying on the study developed in \cite{CMPP4}. 
We recall that stability of fixed point property deals with the following question: let us suppose that a Banach space $X$ has the fixed point property and $Y$ is a Banach space isomorphic to $X$ with "small" Banach-Mazur distance, does $Y$ have fixed point property? This problem has been widely studied for fpp and only occasionally for weak$^*$ topology (see \cite{Soardi,Dominguez-Garcia-Japon1998}).
In \cite{CMPP4}, we established a characterization of stability of the $\sigma (\ell_1,X)$-fpp by means of a geometrical property. In the present paper we wish to investigate some quantitative aspects of stability of the $\sigma(\ell_1,X)$-fpp.  To this aim, when $X^*$ enjoys the $\sigma(X^*, X)$-fpp, it reveals to be useful to introduce the stability constant:
$$
\gamma^*(X)=\sup \left\lbrace \gamma\geq 1: Y^* \textrm{ has the } \sigma(Y^*,Y) \textrm{-fpp whenever }  d(X,Y)\leq \gamma\right\rbrace, 
$$
where $d(X,Y)$ denotes the Banach-Mazur distance between $X$ and $Y$.
Under the assumption that $X$ is a predual of $\ell_1$, Section \ref{Sec Stability} establishes the relation between the stability constant and the constant 
$$
r^*(X)=\inf \left\{r>0: \left(\textrm{ext}(B_{\ell_1})\right)' \subset r B_{\ell_1} \right\}
$$
where $\left(\textrm{ext}(B_{\ell_1})\right)'$ denotes the set of $\sigma(\ell_1, X)$-limit points of the extreme points of $B_{\ell_1}$. 
It is worth pointing out that the spaces satisfying $\left(\textrm{ext}(B_{X^*})\right)' \subset r B_{X^*}$ with $r<1$ have already been studied in the framework of polyhedrality for Banach spaces (see \cite{Durier-Papini1993, Fonf-Vesely2004, Casini-Miglierina-Piasecki-Vesely2016, CMPP4}). Moreover, it is known that if $r^*(X)=0$, then $X=c_0$ (see \cite{Durier-Papini1993}) and in this case $\gamma^*(c_0)\leq 2$ by a result by Lim \cite{Lim1980}. A lower bound for $\gamma^*(X)$ can be obtained from the key result of \cite{CMPP4}. Indeed, in that paper we proved that if $r^*(X) \in [0,1]$, then
$$
\gamma^*(X)\geq \dfrac{2}{1+r^*(X)}.
$$
Moreover, from Theorem 3.4 in \cite{CMPP4} we know that if $r^*(X)=1$, then $\gamma^*(X)=1$. The main result of Section \ref{Sec Stability} shows that for $r^*(X) \in (0,1)$ also the reverse inequality holds true. Therefore, we obtain that
$$
\gamma^*(X)= \dfrac{2}{1+r^*(X)}.
$$

The stability constant $\gamma^*(X)$ takes into account a broad family of perturbations of the space $X$. Indeed, if $Y$ is isomorphic to $X$, then both the classes of $\sigma(Y^*,Y)$-compact convex sets and of nonexpansive mappings on $Y^*$ are different from those in $X^*$. It may be interesting to deal with the stability of the $\sigma(\ell_1,X)$-fpp in the restricted framework of the preduals of $\ell_1$, hence by allowing only modifications to the class of $\sigma(Y^*,Y)$-compact convex sets. Namely, in Section \ref{Sec stability restricted}, we deal with the estimation of the following constant:
$$
\eta^*(X)=\sup \left\lbrace\eta \geq 1:\,Y^*=\ell_1,\,d(X,Y) \leq \eta \Rightarrow Y^*\,\textrm{ has the }\,\sigma(\ell_1,Y)\textrm{-fpp}\right\rbrace.     
$$ 
where $X$ is a predual of $\ell_1$ that enjoys the $\sigma(\ell_1,X)$-fpp. The main result of this section gives a sharp estimation of $\eta^*(c_0)$. Indeed, we prove that $\eta^*(c_0)=3$.
The estimate we obtain is interesting since it shows that we should move as far as $c$ is from $c_0$ in order to lack the $w^*$-fpp. Finally, we provide an example of a predual $X$ of $\ell_1$ such that the  $\sigma(\ell_1, X)$-fpp holds, whereas the Banach-Mazur distance between $c_0$ and $X$ is equal to $3$.  

\section{A quantitative view on stability of weak$^*$ fixed point property in $\ell_1$}\label{Sec Stability}
This section is devoted to study the stability of the $\sigma(\ell_1,X)$-fpp. In particular we provide a sharp estimation of the constant $\gamma^*(X)$ as defined in the introduction. In order to prove the main result of this section we need some preliminary steps.
We begin with the following simple lemma. Probably it is well known, but we were not able to find a suitable reference.
\begin{lem}\label{approximation_lemma}
Let ${x^*_n}\subset X^*$ be a sequence norm convergent to $x^*$. Then 
$$
\lim_{n \rightarrow \infty} d(\ker x^*, \ker x^*_n)=1.
$$
\end{lem}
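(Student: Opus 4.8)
The plan is to prove the two inequalities $\liminf_n d(\ker x^*,\ker x^*_n)\ge 1$ (which is trivial, since the Banach--Mazur distance between any two nonzero closed subspaces is at least $1$) and $\limsup_n d(\ker x^*,\ker x^*_n)\le 1$. For the latter, I would construct, for $n$ large, an explicit isomorphism $T_n\colon \ker x^*_n\to\ker x^*$ whose norm and whose inverse norm tend to $1$. The natural candidate is a rank-one perturbation of the identity: fix a vector $u\in X$ with $\|u\|=1$ and $x^*(u)=1$ (assume $\|x^*\|=1$ after normalizing, which we may do since $x^*_n\to x^*$ in norm forces $\|x^*_n\|\to 1$ and one can rescale the functionals without changing their kernels), and for $y\in\ker x^*_n$ set
\[
T_n(y)=y-x^*(y)\,u .
\]
Then $x^*(T_n(y))=x^*(y)-x^*(y)=0$, so $T_n$ maps $\ker x^*_n$ into $\ker x^*$; moreover for $y\in\ker x^*_n$ we have $x^*_n(y)=0$, hence $x^*(y)=x^*(y)-x^*_n(y)=(x^*-x^*_n)(y)$, giving $|x^*(y)|\le\|x^*-x^*_n\|\,\|y\|=:\varepsilon_n\|y\|$ with $\varepsilon_n\to 0$.

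From this estimate, $\|T_n(y)\|\le\|y\|+\varepsilon_n\|y\|\|u\|=(1+\varepsilon_n)\|y\|$, so $\|T_n\|\le 1+\varepsilon_n$. For the inverse, I would check $T_n$ is onto $\ker x^*$: given $z\in\ker x^*$, one solves $y-x^*(y)u=z$ by writing $y=z+t u$ and imposing $x^*_n(y)=0$, i.e. $x^*_n(z)+t\,x^*_n(u)=0$; since $x^*_n(u)\to x^*(u)=1$, this is solvable for large $n$ with $t=-x^*_n(z)/x^*_n(u)$, and $|t|\le|x^*_n(z)|/|x^*_n(u)|=|x^*_n(z)-x^*(z)|/|x^*_n(u)|\le \varepsilon_n\|z\|/|x^*_n(u)|$, so $\|y\|\le\|z\|+|t|\|u\|\le(1+\delta_n)\|z\|$ with $\delta_n\to 0$. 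Hence $\|T_n^{-1}\|\le 1+\delta_n$, and therefore $d(\ker x^*,\ker x^*_n)\le\|T_n\|\,\|T_n^{-1}\|\le(1+\varepsilon_n)(1+\delta_n)\to 1$. Combining the two bounds yields the claim.

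The only point requiring a little care is the degenerate case $x^*=0$: then $\ker x^*=X$, and one should either exclude it by hypothesis or observe that $x^*_n\to 0$ in norm makes the statement vacuous or trivially interpretable. Assuming $x^*\ne 0$, the choice of $u$ with $x^*(u)=1$, $\|u\|$ close to $1$ is possible by definition of the norm of a functional (taking $\|u\|\le 1/\|x^*\|+\text{small}$ and absorbing the extra factor into the error terms); I do not expect any genuine obstacle here, as the whole argument is a soft rank-one perturbation estimate. The main (and really the only) thing to get right is bookkeeping the error terms $\varepsilon_n,\delta_n$ and verifying surjectivity of $T_n$ for $n$ large, which is where $x^*_n(u)\to1$ is used.
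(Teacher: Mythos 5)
Your proof is correct, but it takes a genuinely different route from the paper's. The paper also starts from the rank-one picture --- it builds projections $P(x)=x-x^*(x)z$ and $P_n(x)=x-\lambda_n x^*_n(x)z$ onto the two kernels and shows $\|P-P_n\|\to 0$ --- but it then converts ``the projections are close'' into ``the Banach--Mazur distance tends to $1$'' by invoking Ostrovski\u{\i}'s machinery of the operator opening between subspaces (Theorem 4.2(e) and Proposition 6.1 of \cite{Ostrovskii1994}). You instead restrict the rank-one perturbation directly to $\ker x^*_n$, obtaining an explicit isomorphism $T_n(y)=y-x^*(y)u$ onto $\ker x^*$, and estimate $\|T_n\|$ and $\|T_n^{-1}\|$ by hand using $|x^*(y)|=|(x^*-x^*_n)(y)|\le\varepsilon_n\|y\|$ on $\ker x^*_n$; this yields a self-contained, elementary argument with no external references, at the cost of a slightly longer verification (injectivity and surjectivity of $T_n$ for large $n$). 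One small simplification you could make: there is no need for $\|u\|$ to be close to $1$ --- any fixed $u$ with $x^*(u)=1$ works, since $\|u\|$ only multiplies the error terms $\varepsilon_n,\delta_n$, which tend to $0$ anyway. Your remark about the degenerate case $x^*=0$ is apt; the paper tacitly assumes $x^*\neq 0$ by choosing $z$ with $x^*(z)=1$.
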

\begin{proof}
Let us consider  a projection $P:X\rightarrow \ker x^*$. It is well known that ${P(x)=x-x^*(x)z}$, where $x^*(z)=1$. We can assume that $x_n^*(z)>0$. Then, let $\lambda_n \in \mathbb{R}$  be such that $x^*_n(\lambda_nz)=1$ for every $n$. It holds
$$
\left| 1-\lambda_n\right| =\left| x^*_n(\lambda_nz)-x^*(\lambda_nz) \right| \leq \left|\lambda_n\right| \|z\|\,\|x^*-x^*_n\|.
$$
Hence, $\lim_{n \rightarrow \infty} \lambda_n=1$. Now, we define a sequence of projections ${P_n:X\rightarrow \ker x^*_n}$ such that $P_n(x)=x-\lambda_nx^*_n(x)z$. Since
$$
\|P(x)-P_n(x)\|\leq \|\lambda_nx^*_n-x^*\|\,\|x\|\,\|z\|,
$$
we have $\lim_{n \rightarrow \infty} \|P-P_n\|=0$.
Now we recall the definition of operator opening between two closed subspaces $Y$ and $Z$ of the space $X$ (see \cite{Ostrovskii1994}). Let us denote by
$$
r_0(Y,Z)=\inf \left\lbrace\|\psi-I\|:\psi(Y)=Z,\,\psi \mbox{ invertible linear operator on } X \right\rbrace, 
$$
then the operator opening between $Y$ and $Z$ is defined by $r(Y,Z)=\max\left\lbrace r_0(Y,Z),r_0(Z,Y) \right\rbrace$. Theorem 4.2 (e) in \cite{Ostrovskii1994} implies
$$
\lim_{n \rightarrow \infty} r(\ker x^*,\ker x^*_n)=0.
$$ 
Finally, Proposition 6.1 in \cite{Ostrovskii1994} allows us to conclude that 
$$
\lim_{n \rightarrow \infty} d(\ker x^*,\ker x^*_n)=1.
$$ 
\end{proof}

Now, we pass to consider the properties of a suitable renorming of some hyperplanes of $c$, the space of convergent sequences. We recall that the standard duality between $c$ and $\ell_1$ is defined by
$$
x^*(x)=x^{*}(1)\lim_i x(i) + \sum_{j=1}^{\infty}x^{*}(j+1)x(j)
$$
for every $x=(x(1),x(2),\dots)\in c$ and $x^*=(x^*(1),x^*(2), \dots)\in \ell_1$. 

For every $\alpha=(\alpha(1),\alpha(2),\dots)\in B_{\ell_1}$, we define the following hyperplane of $c$:
$$
W_\alpha=\left\lbrace x=(x(1),x(2),\dots)\in c: \lim_{i \rightarrow \infty}x(i)=\sum_{i=1}^{+\infty}\alpha(i)x(i) \right\rbrace.
$$ 
For a detailed study of this class of spaces we refer the reader to \cite{Casini-Miglierina-Piasecki2014} and \cite{Casini-Miglierina-Piasecki2015}. Here we recall only that $W_\alpha^*=\ell_1$.

Here and subsequently we adopt the following notations: $\left\|\cdot\right\|_{\infty}$ and $\left|\cdot\right|_{\ell_1}$ denote respectively the standard norm in $c$ and $\ell_1$. Let $n\in\mathbb{N}$. For each sequence $x=(x(1),x(2),\dots)$ we put $P_n(x)=(x(1),\dots,x(n),0,0,\dots)$ and $R_n x=x-P_n x$, and $x^+$ and $x^-$ denotes the positive and negative part of $x$, respectively.

Propositions \ref{main prop 1} and \ref{main prop 2} presented below form one of the main steps in the proof of the main result of this section.
%play the key role in the proof of Proposition \ref{main prop 3}.

\begin{prop} \label{main prop 1} Let $e^*=(e^*(1),\dots,e^*(n),0,0,\dots)\in \ell_1$ with $r_n:=\left|e^*\right|_{\ell_1}\in (0,1)$.
 For all $x\in W_{e^*}$, define

\begin{equation*}
\left\|x\right\|_n=\left(\left\|R_n x^+\right\|_\infty\vee
r_n\left\|R_n x^-\right\|_\infty
+\left\|R_n x^-\right\|_\infty\vee
r_n\left\|R_n x^+\right\|_\infty\right)\vee (1+r_n)\left\|P_n x\right\|_\infty.
\end{equation*}
Then
\begin{equation*}
(W_{e^*},\left\|\cdot\right\|_n)^*=(\ell_1,\left|\cdot\right|_n),
\end{equation*}
where
\begin{equation*}
\left|f\right|_n=\max\left\{\frac{r_n \left|R_n f^+\right|_{\ell_1} + \left|R_n f^-\right|_{\ell_1} }{1+r_n},
\frac{\left|R_n f^+\right|_{\ell_1} + r_n\left|R_n f^-\right|_{\ell_1}}{1+r_n}\right\}+\frac{\left|P_n f\right|_{\ell_1}}{1+r_n},
\end{equation*}
and a duality map $\phi:\ell_{1}\rightarrow W_{e^*}^{*}$ is
defined by 
\[
(\phi(f))(x)=\sum_{j=1}^{+\infty}x(j)f(j),
\]
where $f=(f(1),f(2),\dots)\in\ell_{1}$ and $x=(x(1),x(2),\dots)\in W_{e^*}$.

\end{prop}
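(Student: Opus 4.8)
The statement packages two assertions: that $\phi$ is a linear bijection of $\ell_1$ onto $W_{e^*}^{*}$, and that it carries $|\cdot|_n$ onto the dual norm of $\|\cdot\|_n$. (That $\|\cdot\|_n$ is a norm on $W_{e^*}$, equivalent to $\|\cdot\|_\infty$, is routine, using $(x+y)^{+}\le x^{+}+y^{+}$ and $(x+y)^{-}\le x^{-}+y^{-}$ coordinatewise.) The first assertion I would dispatch quickly. Since $W_{e^*}=\ker\Lambda$ for the bounded functional $\Lambda(x)=\lim_i x(i)-\sum_{i=1}^{n}e^*(i)x(i)$ on $c$, every $\psi\in W_{e^*}^{*}$ extends to $c^{*}=\ell_1$, and substituting the relation $\lim_i x(i)=\sum_{i=1}^{n}e^*(i)x(i)$ into the standard representation of $c^{*}$ rewrites the restriction of $\psi$ to $W_{e^*}$ in the form $\phi(f)$; this gives surjectivity. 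For injectivity, if $\phi(f)=0$ on $W_{e^*}$ then the bounded functional $x\mapsto\sum_j x(j)f(j)$ on $c$ vanishes on the hyperplane $W_{e^*}$, hence equals a scalar multiple of $\Lambda$; evaluating on the vectors $e_k$ and on the constant sequence (here $\sum_{i\le n}e^*(i)\le r_n<1$ is used) forces that scalar, and thus $f$, to be zero.

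For the norm identity I would first record two structural facts. Negation interchanges $x^{+}$ and $x^{-}$ and fixes $\|P_n x\|_\infty$, so $\|{-x}\|_n=\|x\|_n$, and the dual norm of $\phi(f)$ is $\sup\{\sum_j x(j)f(j):x\in W_{e^*},\ \|x\|_n\le 1\}$. Moreover, with $\Phi(a,b)=\max(a,r_nb)+\max(b,r_na)$ one has $\|x\|_n=\max\{\Phi(\|R_n x^{+}\|_\infty,\|R_n x^{-}\|_\infty),\ (1+r_n)\|P_n x\|_\infty\}$, and $K:=\{(a,b):a,b\ge 0,\ \Phi(a,b)\le 1\}$ is a compact convex pentagon with vertices $(0,0)$, $(\frac1{1+r_n},0)$, $(\frac1{1+r_n},\frac{r_n}{1+r_n})$, $(\frac{r_n}{1+r_n},\frac1{1+r_n})$, $(0,\frac1{1+r_n})$.

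The inequality $\|\phi(f)\|^{*}\le|f|_n$ then follows by splitting $\sum_j x(j)f(j)$ at the index $n$, for $x$ with $\|x\|_n\le1$: the head $\sum_{j\le n}x(j)f(j)$ is at most $\|P_n x\|_\infty\,|P_n f|_{\ell_1}\le\frac{|P_n f|_{\ell_1}}{1+r_n}$, while writing $x(j)=x^{+}(j)-x^{-}(j)$ and discarding nonpositive products bounds the tail $\sum_{j>n}x(j)f(j)$ by $\|R_n x^{+}\|_\infty\,|R_n f^{+}|_{\ell_1}+\|R_n x^{-}\|_\infty\,|R_n f^{-}|_{\ell_1}$, an affine function of $(\|R_n x^{+}\|_\infty,\|R_n x^{-}\|_\infty)\in K$; its maximum over the vertices of $K$ is $\max\{\frac{|R_n f^{+}|_{\ell_1}+r_n|R_n f^{-}|_{\ell_1}}{1+r_n},\frac{r_n|R_n f^{+}|_{\ell_1}+|R_n f^{-}|_{\ell_1}}{1+r_n}\}$, and adding the two bounds gives $|f|_n$.

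The reverse inequality requires near-optimal $x$ lying in $W_{e^*}$, which is the one genuinely delicate point: the constraint $\lim_i x(i)=\sum_{i=1}^{n}e^*(i)x(i)$ couples the tail of $x$ to its first $n$ coordinates, so a finitely supported competitor will not serve. Given $\varepsilon>0$ I would pick $N>n$ with $\sum_{j>N}|f(j)|<\varepsilon$, set $x(j)=\pm\frac1{1+r_n}$ for $j\le n$ with the sign of $f(j)$, assign to each coordinate $n<j\le N$ the sign of $f(j)$ and the magnitude prescribed by the vertex of $K$ — one of $(\frac1{1+r_n},\frac{r_n}{1+r_n})$ and $(\frac{r_n}{1+r_n},\frac1{1+r_n})$ — realising $|f|_n$, and set $x(j)=L:=\sum_{i=1}^{n}e^*(i)x(i)$ for $j>N$, so that $x$ is eventually constant with limit $L=\sum_{i=1}^{n}e^*(i)x(i)$, i.e. $x\in W_{e^*}$. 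The crucial point is $|L|\le\|P_n x\|_\infty\,|e^*|_{\ell_1}=\frac{r_n}{1+r_n}$, so appending the constant tail at level $L$ keeps $(\|R_n x^{+}\|_\infty,\|R_n x^{-}\|_\infty)$ inside $K$ and $\|P_n x\|_\infty\le\frac1{1+r_n}$, whence $\|x\|_n\le1$; a direct computation then gives $\sum_j x(j)f(j)\ge|f|_n-C\varepsilon$, and letting $\varepsilon\to0$ completes the proof. Reconciling near-optimality with the hyperplane constraint in this way is the main obstacle; everything else is bookkeeping.
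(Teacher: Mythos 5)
Your proof is correct, and the overall strategy matches the paper's: identify $W_{e^*}^*$ with $\ell_1$ via the coordinate pairing, show the dual norm of $\phi(f)$ equals $|f|_n$ by bounding the supremum from above and exhibiting a near-optimal $x\in W_{e^*}$, with your witness (signs of $f$ on the head, vertex magnitudes $\frac{r_n}{1+r_n}$ and $\frac{1}{1+r_n}$ on the middle block, constant tail at $L=\sum_{i\le n}e^*(i)x(i)$) being exactly the paper's sequence $x^N$. The two genuine differences are organizational and both favor your write-up in terms of self-containedness and transparency. First, the paper does not reprove the representation $(W_{e^*},\|\cdot\|_n)^*=\ell_1$; it checks $(1+r_n)\|x\|_\infty\le\|x\|_n\le 2\|x\|_\infty$ and cites Theorem 4.3 of an earlier paper of the authors, whereas you derive it directly by Hahn--Banach extension to $c$ and the substitution $\lim_i x(i)=\sum_{i\le n}e^*(i)x(i)$ (your injectivity argument is fine; note the scalar $c$ is forced to vanish by the equation $-c\sum e^*(i)=c(1-\sum e^*(i))$ alone, so the hypothesis $r_n<1$ is not really what kills it). Second, for the upper bound the paper runs a three-case analysis (with subcases) according to which terms realize the maxima in $\|x\|_n$, restricted to a dense set of $x$ attaining their suprema; your observation that the tail estimate is an affine function of $\bigl(\|R_nx^+\|_\infty,\|R_nx^-\|_\infty\bigr)$ ranging over the convex pentagon $\{\Phi\le 1\}$, hence maximized at a vertex, packages all of those cases into one convexity argument -- the paper's Cases 1--3 correspond precisely to the edges and vertices of your pentagon. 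Nothing is lost: the head and tail are bounded separately because $\|x\|_n\le 1$ controls $(1+r_n)\|P_nx\|_\infty$ and $\Phi$ simultaneously, and your verification that the constant tail at level $L$ (with $|L|\le\frac{r_n}{1+r_n}$) keeps the witness inside the constraint set supplies the check the paper leaves implicit.
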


\begin{proof}
First, we begin by noticing that $\left\|\cdot\right\|_n$ is a norm equivalent to the  $\left\|\cdot\right\|_\infty$ norm,

\begin{equation*}
(1+r_n)\left\|x\right\|_\infty
=(1+r_n)\left(\left\|R_n x\right\|_\infty\vee \left\|P_n x\right\|_{\infty}\right)\leq
\left\|x\right\|_n  \leq 2\left(\left\|R_n x\right\|_\infty \vee
\left\|P_n x\right\|_\infty\right)=2\left\|x\right\|_\infty
\end{equation*}
so, from Theorem 4.3 in \cite{Casini-Miglierina-Piasecki2014}, we know that the dual of $(W_{e^*},\left\|\cdot\right\|_n)$ is representable by $\ell_1$ with duality map $\phi:\ell_{1}\rightarrow W_{e^*}$
defined by 
\[
(\phi(f))(x)=\sum_{j=1}^{+\infty}x(j)f(j),
\]
where $f=(f(1),f(2),\dots)\in\ell_{1}$ and $x=(x(1),x(2),\dots)\in W_{e^*}$.
%Consequently, if $X=(W_{e^*},\left\|\cdot\right\|_{\infty})$ and
%$Y=(W_{e^*},\left\|\cdot\right\|)$, then $d(X,Y)\leq\frac{2}{1+r}$.
Therefore it suffices to show that

\begin{equation*}
\left|f\right|_n=\sup\left\{\sum_{i=1}^{\infty}x(i) f(i): x\in
W_{e^*},\left\|x\right\|_n\leq 1 \right\}
\end{equation*}
for each $f\in\ell_1$.
As in \cite{Lim1980}, the supremum can be taken again over $x$ satisfying
$x(i) f(i)\geq  0$. In case $x(i) f(i)<0$, replace it by $0$ when
estimating from above. Also, notice that

\begin{equation*}
\frac{1}{2}\left|f\right|_{\ell_1}=\frac{\left|R_n f\right|_{\ell_1}+\left|P_n f\right|_{\ell_1}}{2} \leq
\left|f\right|_n\leq\frac{\left|R_n f^+\right|_{\ell_1}+\left|R_n f^-\right|_{\ell_1}+\left|P_n f\right|_{\ell_1}}{1+r_n}
=\frac{1}{1+r_n}\left|f\right|_{\ell_1}.
\end{equation*}

Without loss of generality, one can assume that
$$\left|f\right|_n=\frac{r_n}{1+r_n}\left|R_n f^+\right|_{\ell_1}+\frac{1}{1+r_n}\left|R_n f^-\right|_{\ell_1}+\frac{1}{1+r_n}\left|P_n f\right|_{\ell_1},$$
and so

$$\left|R_n f^-\right|_{\ell_1}\geq \left|R_n f^+\right|_{\ell_1}\hspace{2mm}  (\heartsuit). $$

There are three cases to consider.

\textbf{Case 1.} Assume that

\begin{equation}\label{case 1.1}
\left\|R_n x^+\right\|_\infty\vee
r_n\left\|R_n x^-\right\|_\infty=\left|x^+(i)\right|
\end{equation}
and
\begin{equation}\label{case1.2}
\left\|R_n x^-\right\|_\infty\vee r_n\left\|R_n x^+\right\|_\infty=\left|x^-(j)\right|
\end{equation}
for some $i,j\geq n+1$. Then
 $$\left\|x\right\|_n=\left(\left|x^+(i)\right|+\left|x^-(j)\right|\right)\vee
(1+r_n)\left\|P_n x\right\|_\infty.$$

\textbf{SubCase 1.1.} If
$(1+r_n)\left\|P_n x\right\|_\infty\leq\left|x^+(i)\right|+\left|x^-(j)\right|$, then

 \begin{eqnarray*}
  (1+r_n)f(x)&=&(1+r_n)\sum_{k=1}^{n}x(k)f(k)+(1+r_n)\sum_{k=n+1}^{\infty}x(k)f(k)
 \\&\leq &\left(\left|x^+(i)\right|+\left|x^-(j)\right|\right)\left|P_n f\right|_{\ell_1}+(1+r_n)\left|x^+(i)\right|\left|R_n f^+\right|_{\ell_1}
 \\ && +(1+r_n)\left|x^-(j)\right|\left|R_n f^-\right|_{\ell_1}
 \\&\leq &\left(\left|x^+(i)\right|+\left|x^-(j)\right|\right)
 \left(\left|P_n f\right|_{\ell_1}+r_n\left|R_n f^+\right|_{\ell_1}+\left|R_n f^-\right|_{\ell_1}\right)
  \end{eqnarray*}
and the last inequality holds since $r_n\left|x^-(j)\right|\leq\left|x^+(i)\right|$ by (\ref{case 1.1}) and ${\left|R_n f^+\right|_{\ell_1}\leq\left|R_n f^-\right|_{\ell_1}}$ by $(\heartsuit)$. Thus, $f(x)\leq \left\|x\right\|_n\left|f\right|_n$.

 \textbf{SubCase 1.2.} If $\left|x^+(i)\right|+\left|x^-(j)\right|\leq (1+r_n)\left\|P_n x\right\|_\infty$, then from (\ref{case 1.1}) we obtain $r_n\left|x^-(j)\right|\leq \left|x^+(i)\right|$ and so $(1+r_n)\left|x^-(j)\right|\leq(1+r_n)\left\|P_n x\right\|_\infty$, or equivalently ${\left|x^-(j)\right|\leq\left\|P_n x\right\|_\infty}$. Now we have
 
 \begin{eqnarray*}
 f(x)&\leq&\sum_{k=1}^{n}\left|x(k)\right|\left|f(k)\right|+\left|x^+(i)\right|\left|R_n f^+\right|_{\ell_1}
 +\left|x^-(j)\right|\left|R_n f^-\right|_{\ell_1}
 \\&\leq& \left\|P_n x\right\|_\infty\left(\left|P_n f\right|_{\ell_1}
 +r_n\left|R_n f^+\right|_{\ell_1}+\left|R_n f^-\right|_{\ell_1}\right).
 \end{eqnarray*}
 This time the last inequality holds since $\left|x^+(i)\right|-r_n\left\|P_n x\right\|_\infty\leq \left\|P_n x\right\|_\infty-\left|x^-(j)\right|$,
 $0\leq \left\|P_n x\right\|_\infty-\left|x^-(j)\right|$ and $\left|R_n f^+\right|_{\ell_1}\leq\left|R_n f^-\right|_{\ell_1}$ by $(\heartsuit)$. Therefore, we obtain again that $f(x)\leq \left\|x\right\|_n\left|f\right|_n$.

\textbf{Case 2.} $\left\|R_n x^+\right\|_\infty\vee r_n\left\|R_n x^-\right\|_\infty=\left|x^+(i)\right|$, $\left\|R_n x^-\right\|_\infty\vee r_n\left\|R_n x^+\right\|_\infty
=r_n\left|x^+(i)\right|$ for some $i\geq n+1$ and so $\left\|x\right\|_n=(1+r_n)\left|x^+(i)\right| \vee (1+r_n)\left\|P_n x\right\|_\infty=(1+r_n)\left\|x\right\|_{\infty}$. This further implies $\left\|R_n x^-\right\|_\infty\leq r_n\left\|x\right\|_{\infty}$ and so 
 
 \begin{eqnarray*}
 f(x)&\leq&\left\|x\right\|_{\infty}\left|P_n f\right|_{\ell_1}+\left\|x\right\|_{\infty}\left|R_n f^+\right|_{\ell_1}+r_n\left\|x\right\|_{\infty}\left|R_n f^-\right|_{\ell_1}
 \\&\leq&\left\|x\right\|_{\infty}\left|P_n f\right|_{\ell_1}+r_n\left\|x\right\|_{\infty}\left|R_n f^+\right|_{\ell_1}+\left\|x\right\|_{\infty}\left|R_n f^-\right|_{\ell_1}
 \\&=&\left\|x\right\|_{\infty}(\left|P_n f\right|_{\ell_1}+r_n\left|R_n f^+\right|_{\ell_1}+\left|R_n f^-\right|_{\ell_1}),
 \end{eqnarray*}
 where last inequality holds by $(\heartsuit)$. Thus, $f(x)\leq \left\|x\right\|_n\left|f\right|_n$ and Case 2 is completed.

 \textbf{Case 3.} $\left\|R_n x^+\right\|_\infty\vee r_n\left\|R_n x^-\right\|_\infty=r_n\left|x^-(j)\right|$, $\left\|R_n x^-\right\|_\infty\vee r_n\left\|R_n x^+\right\|_\infty
 =\left|x^-(j)\right|$ for some $j\geq n+1$ and so $\left\|x\right\|_n=(1+r_n)\left|x^-(j)\right|\vee (1+r_n)\left\|P_n x\right\|_\infty=(1+r_n)\left\|x\right\|_{\infty}$. Case 3 can be solved using similar ideas as in Case 2.

Since the set of points considered in the above cases forms a dense set in $W_{e^*}$, we have shown that

\begin{equation*}
\sup\left\{\sum_{i=1}^{\infty}x(i)f(i): x\in W_{e^*},\left\|x\right\|_n\leq 1 \right\}\leq \left|f\right|_n. 
\end{equation*}

 To prove the reversed inequality, one can consider a sequence $\left\{ x^N \right\}_{N>n+1}^{\infty} \subset W_{e^*}$ defined as follows:
 
  \begin{equation*}
    x^N(k)=
        \begin{cases}
           (\textrm{sgn} f_k)\frac{1}{1+r_n}  & \quad\text{for}\quad k=1,\dots,n,\\
           \frac{-1}{1+r_n} & \quad\text{for}\quad f(k)<0 \quad \text{and}\quad k=n+1,\dots,N,\\
            \frac{r_n}{1+r_n} & \quad\text{for}\quad f(k)\geq0 \quad \text{and}\quad k=n+1,\dots,N,\\
            \frac{1}{1+r_n} \sum\limits_{i=1}^{n} (\textrm{sgn} f_i)e^*(i) & \quad\text{for}\quad k\geq N+1.
        \end{cases}
  \end{equation*}

 \end{proof}

\begin{prop}\label{main prop 2}
$(W_{e^*},\left\|\cdot\right\|_n)^*=(\ell_1,\left|\cdot\right|_n)$ fails the $w^*$-fpp.
\end{prop}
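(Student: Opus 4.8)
The plan is to imitate, inside the weak$^*$ topology $\sigma(\ell_1,W_{e^*})$, the classical example showing that $\ell_1=c^*$ lacks the $\sigma(\ell_1,c)$-fpp: I will produce a nonempty convex $\sigma(\ell_1,W_{e^*})$-compact set $C\subset\ell_1$ and a $|\cdot|_n$-nonexpansive self-map $T$ of $C$ with no fixed point. Note first that, by Proposition~\ref{main prop 1}, the duality between $(W_{e^*},\|\cdot\|_n)$ and $(\ell_1,|\cdot|_n)$ is realized by the same pairing $\phi$ as the duality between $W_{e^*}$ and $\ell_1$, so that the weak$^*$ topology carried by $(\ell_1,|\cdot|_n)$ is exactly $\sigma(\ell_1,W_{e^*})$ and the weak$^*$-compact subsets of $\ell_1$ are unchanged by the renorming. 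Writing $e_j\in\ell_1$ for the $j$-th unit vector, I record that for every $x\in W_{e^*}$ one has $(\phi(e_k))(x)=x(k)\to\lim_i x(i)=\sum_{i=1}^{n}e^*(i)x(i)=(\phi(e^*))(x)$ as $k\to\infty$; hence $e_k\to e^*$ in $\sigma(\ell_1,W_{e^*})$.

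I would then take
\[
C=\bigl\{\,f\in\ell_1:\ f(m)\ge 0\ \text{for}\ m>n,\ \ P_nf=\lambda\,e^*\ \text{for some}\ \lambda\ge 0,\ \ \lambda+\textstyle\sum_{m>n}f(m)=1\,\bigr\},
\]
which is convex and contained in $B_{\ell_1}$ (on $C$ one has $|f|_{\ell_1}=\lambda r_n+(1-\lambda)\le 1$). The requirements $f(m)\ge 0$ for $m>n$ and $P_nf\in\mathbb{R}\,e^*$ are preserved under $\sigma(\ell_1,W_{e^*})$-limits, being cut out by the weak$^*$-continuous functionals $\phi(e_m)$ for $m>n$ and by those $x\in W_{e^*}$ supported in $\{1,\dots,n\}$. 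The delicate point is that the affine normalization survives as well: one fixes an $x\in W_{e^*}$ whose tail coordinates are all $1$ and whose first $n$ coordinates satisfy $\sum_{i=1}^{n}e^*(i)x(i)=1$ (possible as $e^*\ne 0$), and checks that $(\phi(f))(x)=\lambda+\sum_{m>n}f(m)$ for every $f$ meeting the first two requirements; so on $C$ the normalization is the weak$^*$-closed condition $(\phi(f))(x)=1$, and the sign condition $\lambda\ge 0$ for the limit is recovered by also testing against the finitely supported indicators of the sets $\{n+1,\dots,N\}$, which all lie in $W_{e^*}$. Thus $C$ is bounded and weak$^*$-closed, hence $\sigma(\ell_1,W_{e^*})$-compact. (One can show $C$ is the weak$^*$-closed convex hull of $\{e_k:k>n\}$, but this is not needed.)

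Next I would define $T\colon C\to C$ as the affine map induced by the shift $e^*\mapsto e_{n+1}$, $e_k\mapsto e_{k+1}$ for $k>n$; concretely $(Tf)(j)=0$ for $j\le n$, $(Tf)(n+1)=1-\sum_{m>n}f(m)$, and $(Tf)(m)=f(m-1)$ for $m>n+1$. A direct check gives $Tf\in C$ for all $f\in C$. For nonexpansiveness, fix $f,g\in C$, set $d=f-g$ and $a=|R_nd^{+}|_{\ell_1}$, $b=|R_nd^{-}|_{\ell_1}$; since the normalization forces $P_nd=(b-a)\,e^*$, one gets $|P_nd|_{\ell_1}=|b-a|\,r_n$, and the defining formula for $|\cdot|_n$ collapses to $|f-g|_n=\max\{a,b\}$. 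On the other hand $Tf-Tg$ is supported in $\{n+1,n+2,\dots\}$, and both its positive and negative parts have $\ell_1$-norm $\max\{a,b\}$, so $|Tf-Tg|_n=\max\{a,b\}=|f-g|_n$; thus $T$ is in fact a $|\cdot|_n$-isometry of $C$. Finally, $Tf=f$ forces $P_nf=0$, hence $\lambda=0$, and then $f(m)=f(m-1)$ for $m>n+1$ forces $f$ to vanish on $\{n+1,n+2,\dots\}$; so $f=0\notin C$, i.e. $T$ is fixed point free. Therefore $(\ell_1,|\cdot|_n)$ fails the $w^*$-fpp.

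The main obstacle is the verification that $C$ is $\sigma(\ell_1,W_{e^*})$-closed — precisely, that the affine normalization is not lost under weak$^*$ limits — which is exactly where the structure of the particular predual $W_{e^*}$ enters, through the choice of the test element $x$. Once that is settled, the nonexpansiveness of $T$ reduces to the elementary identity $|f-g|_n=\max\{|R_n(f-g)^{+}|_{\ell_1},|R_n(f-g)^{-}|_{\ell_1}\}$ on $C$, and fixed-point-freeness is immediate.
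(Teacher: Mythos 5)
Your proof is correct and follows essentially the same route as the paper: the set $C$ you define in coordinates is exactly the paper's convex hull of $e^*$ and the tail basis vectors $e^*_{n+k}$, and your map $T$ is the same shift, shown in both cases to be a fixed-point-free $\left|\cdot\right|_n$-isometry via the identity $\left|f-g\right|_n=\max\{\left|R_n(f-g)^+\right|_{\ell_1},\left|R_n(f-g)^-\right|_{\ell_1}\}$ on $C$. The only difference is that you spell out the verification that $C$ is $\sigma(\ell_1,W_{e^*})$-compact, which the paper asserts without proof; your argument for it is sound.
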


\begin{proof} Let $C\subset \ell_1$ be defined by $$C=\left\{t_0 e^*+ \sum_{k=1}^{\infty}t_k e_{n+k}^* : t_i\geq 0,\sum_{i=0}^{\infty}t_i=1\right\}.$$
The set $C$ is convex and weak$^*$ compact in $(W_{e^*},\left\|\cdot\right\|_n)^*=(\ell_1,\left|\cdot\right|_n)$.
Consider a mapping $T:C\rightarrow C$  given by
$$T\left(t_0 e^*+ \sum_{k=1}^{\infty}t_k e_{n+k}^*\right)=\sum_{k=0}^{\infty}t_k e_{n+k+1}^*.$$

%and viaTheorem 4.3 in  \cite{CMP1} we conclude that it is weak* compact in $(W_{e^*},\left\|\cdot\right\|_{\infty})^*=(\ell_1,\left|\cdot\right|_1)$. Observe that in Theorem 4.3 in  \%cite{CMP1} and Proposition \ref{main prop 1} above we use exactly the same duality map. Therefore, the identity map $I:(\ell_1,\left|\cdot\right|_1) \rightarrow(\ell_1,\left|\cdot\right|)$ %is weak* continuous, in particular, the set $C$ is weak* compact  also in $(W_{e^*},\left\|\cdot\right\|)^*=(\ell_1,\left|\cdot\right|)$. 

\noindent The map $T$ is fixed point free and $\left|\cdot\right|_n$-nonexpansive. Indeed, let $$t=(t_0 e^*(1),\dots,t_0 e^*(n),t_1,t_2,\dots) \textrm{ and } s=(s_0 e^*(1),\dots,s_0 e^*(n),s_1,s_2,\dots)$$  be two elements of the set $C$. We consider two cases:

 \textbf{Case 1}: $t_0-s_0\geq 0$.

 This further implies $\left|R_n(t-s)^-\right|_{\ell_1}\geq\left|R_n(t-s)^+\right|_{\ell_1}$ and so

\begin{equation*}
\left|t-s\right|_n
=\frac{r_n}{1+r_n}\left|R_n(t-s)^{+}\right|_{\ell_1}+\frac{1}{1+r_n}\left|R_n(t-s)^-\right|_{\ell_1}+\frac{r_n}{1+r_n}\left|t_0-s_0\right|.
\end{equation*}
Now
 \begin{eqnarray*}
 \left|T(t)-T(s)\right|_n
 &=& \max\left\{\frac{r_n}{1+r_n}(\left|t_0-s_0\right|+\left|R_n(t-s)^+\right|_{\ell_1})+\frac{1}{1+r_n}\left|R_n(t-s)^-\right|_{\ell_1}, \right.
 \\&&\left. \quad \quad \frac{1}{1+r_n}(\left|t_0-s_0\right|+\left|R_n(t-s)^+\right|_{\ell_1})+\frac{r_n}{1+r_n}\left|R_n(t-s)^-\right|_{\ell_1}\right\}
\\&= &\frac{r_n}{1+r_n}\left|R_n(t-s)^+\right|_{\ell_1}+\frac{1}{1+r_n}\left|R_n(t-s)^-\right|_{\ell_1}+\frac{r_n}{1+r_n}\left|t_0-s_0\right|
\\&=&\left|t-s\right|_n
 \end{eqnarray*}
and so $T$ is $\left|\cdot\right|_n$-isometry.

\textbf{Case 2}: $t_0-s_0\leq 0$. The proof is similar with Case 1.

\end{proof}

\begin{prop}\label{main prop 3}
If $X$ is a predual of $\ell_1$ with $r^*(X) \in (0,1)$, then $\gamma^*(X)\leq\frac{2}{1+r^*(X)}$.  
\end{prop}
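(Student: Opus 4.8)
The plan is to show that if $r^*(X) \in (0,1)$, then for every $\gamma > \frac{2}{1+r^*(X)}$ one can find a space $Y$ with $d(X,Y) \le \gamma$ whose dual $\ell_1$ fails the $w^*$-fpp, which immediately gives $\gamma^*(X) \le \frac{2}{1+r^*(X)}$. The construction of $Y$ will be exactly the renorming of a hyperplane $W_{e^*}$ of $c$ studied in Propositions \ref{main prop 1} and \ref{main prop 2}: by Proposition \ref{main prop 2}, $(W_{e^*},\|\cdot\|_n)^*$ fails the $w^*$-fpp for any choice of $e^* \in \ell_1$ supported on the first $n$ coordinates with $r_n := |e^*|_{\ell_1} \in (0,1)$. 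So the real work is to embed such a $W_{e^*}$ near $X$ in the Banach-Mazur sense, with the distance controlled by $r_n$ (which we will let approach $r^*(X)$).

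The key step is to identify the right predual $X$ with a hyperplane-type structure. Since $r^*(X) \in (0,1)$, by definition of $r^*$ there is a sequence of extreme points of $B_{\ell_1}$ whose $\sigma(\ell_1,X)$-limit points fill up (arbitrarily close to) the sphere of radius $r^*(X)$; in particular one can extract a single $\sigma(\ell_1,X)$-cluster point $\alpha$ of $(\pm e_k^*)$ with $|\alpha|_{\ell_1}$ as close to $r^*(X)$ as we like. Passing to the predual, this says $X$ contains an isometric copy of a space that looks, up to a finite-dimensional correction, like $W_\alpha$ with $|\alpha|_{\ell_1} \approx r^*(X)$: more precisely, I would use the known structure theory of preduals of $\ell_1$ (Casini--Miglierina--Piasecki) to realize $X$ with a coordinate representation in which, after discarding finitely many coordinates, the defining functional is $\alpha$. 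Then I take $e^* = P_n \alpha$ to be the truncation of $\alpha$ to its first $n$ coordinates, so that $r_n = |P_n\alpha|_{\ell_1} \to |\alpha|_{\ell_1} \approx r^*(X)$ as $n \to \infty$, and I compare $(X,\|\cdot\|_\infty)$ with $(W_{e^*},\|\cdot\|_n)$.

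The Banach-Mazur estimate is where the sandwich inequality from the proof of Proposition \ref{main prop 1} does the heavy lifting: there it is shown that
\[
(1+r_n)\|x\|_\infty \le \|x\|_n \le 2\|x\|_\infty
\]
on $W_{e^*}$, so the formal identity map between the $\|\cdot\|_\infty$-structure and the $\|\cdot\|_n$-structure on the same hyperplane has distance $\le \frac{2}{1+r_n}$. Combining this with the (asymptotically isometric, by Lemma \ref{approximation_lemma} applied to the convergence $P_n\alpha \to \alpha$, or directly by a coordinate shift) identification of $X$ with $(W_{e^*},\|\cdot\|_\infty)$ up to a factor tending to $1$, we get
\[
d(X,(W_{e^*},\|\cdot\|_n)) \le \frac{2}{1+r_n} + o(1) \longrightarrow \frac{2}{1+r^*(X)}.
\]
Hence for any $\gamma > \frac{2}{1+r^*(X)}$ we can pick $n$ large so that $Y := (W_{e^*},\|\cdot\|_n)$ satisfies $d(X,Y) \le \gamma$ while $Y^* = \ell_1$ lacks the $w^*$-fpp, proving $\gamma^*(X) \le \gamma$; letting $\gamma \downarrow \frac{2}{1+r^*(X)}$ finishes the argument.

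The main obstacle I anticipate is the second step: cleanly passing from the abstract hypothesis ``$r^*(X) \in (0,1)$'' to a concrete coordinate model of $X$ in which the relevant $\sigma(\ell_1,X)$-cluster point of the extreme points appears as a single functional $\alpha$ with $|\alpha|_{\ell_1}$ controllably close to $r^*(X)$, and then checking that the finite-dimensional discrepancy between $X$ and $W_{P_n\alpha}$ really does contribute only a factor $1+o(1)$ rather than something that pollutes the constant. This is essentially a structural fact about $\mathcal{L}_1$-preduals and should follow from the cited work of Casini--Miglierina--Piasecki together with Lemma \ref{approximation_lemma}, but it requires care to ensure the estimate is not lossy — any slack here would prevent matching the lower bound $\frac{2}{1+r^*(X)}$ from \cite{CMPP4} and hence prevent the equality $\gamma^*(X) = \frac{2}{1+r^*(X)}$ that is the section's headline result. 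The three case analyses in Proposition \ref{main prop 1} and the explicit fixed-point-free isometry in Proposition \ref{main prop 2}, by contrast, are already in hand and need only be invoked.
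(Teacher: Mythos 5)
There is a genuine gap, and it sits exactly where you flagged your ``main obstacle.'' Your plan identifies $X$ itself (up to a $1+o(1)$ factor) with a hyperplane $(W_{e^*},\left\|\cdot\right\|_\infty)$ and then takes $Y=(W_{e^*},\left\|\cdot\right\|_n)$ as the witness space. But for a general predual $X$ of $\ell_1$ with $r^*(X)\in(0,1)$ there is no such identification: what the structure theory actually gives (and what the paper proves in its Step 1) is that a \emph{quotient} $X/^{\bot}Z$ is isometric to $W_{y^*}$, where $Z$ is the weak$^*$-closed span in $\ell_1$ of $e_0^*$ and a subsequence $(e^*_{n_k})$ of the standard basis converging weak$^*$ to the cluster point $e^*$. ``Discarding finitely many coordinates'' does not reduce $X$ to $W_\alpha$; one must pass to this infinite-dimensional quotient. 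Since the Banach--Mazur distance from $X$ to one of its quotients is not controlled (they need not even be isomorphic), your final estimate $d(X,(W_{e^*},\left\|\cdot\right\|_n))\le \frac{2}{1+r_n}+o(1)$ does not follow from the sandwich inequality $(1+r_n)\left\|x\right\|_\infty\le\left\|x\right\|_n\le 2\left\|x\right\|_\infty$, which only compares two norms on the hyperplane, not on $X$.

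The missing idea is the paper's Step 3: one must build a renorming of all of $X$, not of the quotient. Concretely, the paper transports the dual ball $B_{(\ell_1,\left|\cdot\right|_n)}$ into the subspace $Z\subset\ell_1=X^*$ via the weak$^*$-continuous isomorphism $\phi^{-1}\psi^*I$, and then defines a new dual unit ball
\[
D=\textrm{conv}\left((1-\epsilon)(1+r^*(X)-\epsilon)\,B_{X^*}\cup(\phi^{-1}\psi^*I)\bigl(B_{(\ell_1,\left|\cdot\right|_n)}\bigr)\right),
\]
which is convex, symmetric, weak$^*$ compact with $0$ in its interior, hence the dual ball of an equivalent norm on $X$ giving a space $Y$ with $d(X,Y)\le\frac{2(1+\epsilon)}{(1-\epsilon)(1+r^*(X)-\epsilon)}$. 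The point of the convex hull is that $D\cap Z=(\phi^{-1}\psi^*I)(B_{(\ell_1,\left|\cdot\right|_n)})$, so $\phi^{-1}\psi^*I$ becomes a weak$^*$-continuous \emph{isometry} of $(\ell_1,\left|\cdot\right|_n)$ into $Y^*$, carrying the weak$^*$-compact convex set $C$ and its fixed-point-free nonexpansive map from Proposition \ref{main prop 2} into $Y^*$. Without this extension step your construction only exhibits a failure of the fpp at controlled distance from a quotient of $X$, which does not bound $\gamma^*(X)$. The rest of your outline (the choice of $e^*$ with $\left|e^*\right|_{\ell_1}$ close to $r^*(X)$, the truncation $P_n$, Lemma \ref{approximation_lemma}, and the invocation of Propositions \ref{main prop 1} and \ref{main prop 2}) matches the paper.
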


\begin{proof}
Let $\epsilon \in (0,r^*(X))$ be arbitrarily chosen. There exist $e^* \in (\textrm{ext}(B_{\ell_1}))'$ and a subsequence $(e^*_{n_k})_{k\geq1}$ of the standard basis in $\ell_1$ such that $1>\left|e^*\right|_{\ell_1}>r^*(X)-\frac{\epsilon}{2}$,  $(e^*_{n_k})$ is $\sigma(\ell_1, X)$-convergent 
to $e^*$ and $\left|e^*\right|_{\ell_1}>\sum_{k=1}^{\infty}\left|e^*(n_k)\right|$.

\textbf{Step 1.} (Passing from $X$ to a hyperplane in $c$). Let $Z=\left[\left\{e_0^*, e_{n_1}^*, e_{n_2}^*,\dots\right\}\right]$ be the norm-closed linear span of $\left\{e_0^*, e_{n_1}^*, e_{n_2}^*,\dots\right\}$, where $$e_0^*=\frac{e^*-\sum_{k=1}^{\infty}e^*(n_k)e^*_{n_k}}{\left|e^*\right|_{\ell_1}-\sum_{k=1}^{\infty}\left|e^*(n_k)\right|}.$$ Since $\overline{\left\{e_0^*, e_{n_1}^*, e_{n_2}^*,\dots\right\}}^{\textrm{w}^*}=\left\{e_0^*, e_{n_1}^*, e_{n_2}^*,\dots\right\}\cup \left\{e^*\right\}\subset Z$, Lemma 1 in
\cite{A1992} assures that $\overline{\left[\left\{e_0^*, e_{n_1}^*, e_{n_2}^*,\dots\right\}\right]}^{\textrm{w}^*}=Z$. Thus $Z=(X/^{\bot}Z)^*$. Let $y^*\in \ell_1$ be defined as
\[
y^*=\left(\left|e^*\right|_{\ell_1}-\sum_{k=1}^{\infty}\left|e^*(n_k)\right|,e^*(n_1),
e^*(n_2), e^*(n_3),\dots \right).
\]
Since $y^* \in B_{\ell_1}$, we know that $W_{y^*}^*=\ell_1$ and $
y_{n}^*\overset{\sigma(\ell_1,W_{y^*})}{\longrightarrow}y^*$, where $(y_n^*)$ denotes the standard basis in $\ell_1$.
Let $\phi$ be the basis to basis map of $Z$ onto $\ell_1=W_{y^*}^*$, that is, $\phi\left(a_1
e_0^*+a_2 e_{n_1}^*+a_3 e_{n_2}^*+a_4 e_{n_3}^*+\dots \right)=\sum_{k=1}^{\infty}a_k y_k^*.$ Then $\phi(e^*)=y^*$. Indeed,
\begin{eqnarray*}
\phi(e^*)&=&\phi \left(\left(\left|e^*\right|_{\ell_1}-\sum_{k=1}^{\infty}\left|e^*(n_k)\right|\right)e_{0}^*+ \sum_{k=1}^{\infty} e^*(n_k)
e^*_{n_k}\right)
\\&=&\left(\left|e^*\right|_{\ell_1}-\sum_{k=1}^{\infty}\left|e^*(n_k)\right|\right)
y_1^*+ \sum_{k=1}^{\infty} e^*(n_k) y^*_{k+1} 
\\&=&
\left(\left|e^*\right|_{\ell_1}-\sum_{k=1}^{\infty}\left|e^*(n_k)\right|,e^*(n_1),e^*(n_2),\dots\right)=y^*.
\end{eqnarray*}
Consequently, $\phi$ is a $w^*$-continuous homeomorphism from
$\overline{\left\{e_0^*,e_{n_1}^*,e_{n_2}^*,\dots\right\}}^{\textrm{w}^*}=\left\{e_0^*,e_{n_1}^*,e_{n_2}^*,\dots\right\} \cup \left\{e^*\right\}$
onto $\overline{\left\{y_1^*,y_2^*,\dots\right\}}^{\textrm{w}^*}=\left\{y_1^*,y_2^*,\dots\right\} \cup \left\{y^*\right\}$. By Lemma 2
in \cite{A1992} we see that $\phi$ is a $w^*$-continuous isometry
from $Z$ onto $\ell_1=W_{y^*}^*$. Therefore $W_{y^*}$ is isometric to $X/^{\bot}Z$.
 Since $\lim_{n}\left|y^*-P_{n}y^*\right|_{\ell_1}=0$, Theorem 4.3 in \cite{Casini-Miglierina-Piasecki2014} and Lemma \ref{approximation_lemma} assure that there are $n\in \mathbb{N}$ and an isomorphism $\psi:W_{y^*}\rightarrow W_{P_{n}y^*}$ such that $\left|y^*-P_{n}y^*\right|_{\ell_1}\leq \frac{\epsilon}{2}$ and $(1-\epsilon)\left\|x\right\|_{\infty}\leq\left\|\psi(x)\right\|_{\infty} \leq (1+\epsilon)\left\|x\right\|_{\infty}$ for all $x \in W_{y^*}$.

\textbf{Step 2.} (Renorming of a hyperplane in $c$). Put $r_n:=\left|P_{n}y^*\right|_{\ell_1}$. As in Propositions \ref{main prop 1} and \ref{main prop 2}, we consider a renorming defined for all $x\in W_{P_{n}y^*}$ by

\begin{equation*}
\left\|x\right\|_n=\left(\left\|R_{n}x^+\right\|_\infty\vee
r_n\left\|R_{n}x^-\right\|_\infty
+\left\|R_{n}x^-\right\|_\infty\vee
r_n \left\|R_{n}x^+\right\|_\infty\right)\vee (1+r_n)\left\|P_{n}x\right\|_{\infty},
\end{equation*}
a set $C\subset \ell_1$ given by $$C=\left\{t_0 P_ny^*+ \sum_{k=1}^{\infty}t_k e_{n+k}^* : t_i\geq 0,\sum_{i=0}^{\infty}t_i=1\right\},$$
and a  $\left|\cdot\right|_n$-nonexpansive fixed point free mapping $T:C\rightarrow C$  defined by
$$T\left(t_0 P_ny^*+ \sum_{k=1}^{\infty}t_k e_{n+k}^*\right)=\sum_{k=0}^{\infty}t_k e_{n+k+1}^*.$$
Let $I$ from $\left(\ell_1, \left|\cdot \right|_n  \right)=\left( W_{P_n y^*},\left\|\cdot \right\|_n \right)^*$ onto $\left(\ell_1, \left|\cdot \right|_{\ell_1}  \right)=\left( W_{P_n y^*},\left\|\cdot \right\|_\infty \right)^*$ be the identity map.

\textbf{Step 3.} (Back to $X$). The mapping $\phi^{-1}\psi^* I$ is a weak$^*$ continuous isomorphism from $(\ell_1, \left|\cdot \right|_n)$ onto $(Z,\left|\cdot \right|_{\ell_1})$ and satisfies
$$(1+r_n)(1-\epsilon)\left|x\right|_n \leq \left|\phi^{-1}\psi^* Ix\right|_{\ell_1}\leq2(1+\epsilon)\left|x\right|_n$$
for all $x \in (\ell_1, \left|\cdot \right|_n)$. Taking also into account the following estimate
\begin{eqnarray*}
r_n&=& \left|P_ny^*\right|_{\ell_1}\geq\left|y^*\right|_{\ell_1}-\left|P_ny^*-y^*\right|_{\ell_1}=\left|e^*\right|_{\ell_1}-\left|P_ny^*-y^*\right|_{\ell_1}
\\&\geq& r^*(X)-\frac{\epsilon}{2}-\frac{\epsilon}{2}=r^*(X)-\epsilon>0,
\end{eqnarray*}
we conclude that
$$(1-\epsilon)(1+r^*(X)-\epsilon) (B_{X^*}\cap Z)\subset (\phi^{-1}\psi^* I)(B_{(\ell_1,\left|\cdot\right|_n)})\subset 2(1+\epsilon)(B_{X^*}\cap Z).$$

Next we define the set $D \subset \ell_1=X^*$ by
$$D=\textrm{conv}\left((1-\epsilon)(1+r^*(X)-\epsilon) B_{X^*}\cup (\phi^{-1}\psi^* I)(B_{(\ell_1,\left|\cdot\right|_n)})\right).$$
It is easy to check that $D$ is convex, symmetric, weak$^*$ compact,
and $0$ is its interior point. Therefore $D$ is a dual unit ball of an
equivalent norm $\left\|\left|\cdot\right|\right\|$ on $X$. Let
${Y=(X,\left\|\left|\cdot\right|\right\|)}$. Obviously, $D=B_{Y^*}$. Since $D\cap Z =(\phi^{-1}\psi^* I)(B_{(\ell_1,\left|\cdot\right|_n)})$, the mapping $\phi^{-1}\psi^* I$ is a weak$^*$ continuous isometry from $(\ell_1, \left|\cdot \right|_n)$ into $Y^*$. All the above implies that the set $(\phi^{-1}\psi^* I)(C)$ is convex, weak$^*$ compact in $Y^*$ and fails the fpp. Finally we observe that 
$$d(X,Y)\leq \frac{2(1+\epsilon)}{(1-\epsilon)(1+r^*(X)-\epsilon)}.$$
Therefore $\gamma^*(X)\leq\frac{2}{1+r^*(X)}$.

\end{proof}

Consequently, by combining remarks made in the introduction with the thesis of Proposition \ref{main prop 3}, we obtain the exact value of $\gamma^*(X)$.

\begin{thm}\label{main Thm gamma stability}
If $X$ is a predual of $\ell_1$ such that the $\sigma(\ell_1,X)$-fpp holds, then
$$
\gamma^*(X) = \frac{2}{1+r^*(X)}.
$$
\end{thm}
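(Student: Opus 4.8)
The plan is to combine two opposite inequalities. For the lower bound, I would recall the result cited from \cite{CMPP4}: if $r^*(X)\in[0,1]$ then $\gamma^*(X)\geq \frac{2}{1+r^*(X)}$. This handles all cases $r^*(X)\in[0,1]$ uniformly and requires nothing new. For the upper bound I would split according to the value of $r^*(X)$. If $r^*(X)\in(0,1)$, Proposition \ref{main prop 3} gives exactly $\gamma^*(X)\leq \frac{2}{1+r^*(X)}$, so the two bounds match and we are done in this range. If $r^*(X)=1$, then $\frac{2}{1+r^*(X)}=1$, and the remark in the introduction (Theorem 3.4 in \cite{CMPP4}) states that in this case $\gamma^*(X)=1$; since $\gamma^*(X)\geq 1$ always holds by definition, the equality $\gamma^*(X)=\frac{2}{1+r^*(X)}$ holds here too. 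The only remaining case is $r^*(X)=0$, which by \cite{Durier-Papini1993} forces $X=c_0$; here $\frac{2}{1+r^*(X)}=2$, and I would invoke Lim's result $\gamma^*(c_0)\leq 2$ cited in the introduction together with the lower bound $\gamma^*(c_0)\geq 2$ to conclude $\gamma^*(c_0)=2$.

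So concretely the proof is essentially a case distinction on $r^*(X)\in\{0\}$, $(0,1)$, $\{1\}$, with each piece already supplied: the universal lower bound from \cite{CMPP4}, and an upper bound that is, respectively, Lim's theorem, Proposition \ref{main prop 3}, and Theorem 3.4 of \cite{CMPP4}. I would write it as a short paragraph verifying that in every case the claimed value is both an upper and a lower bound for $\gamma^*(X)$.

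I do not expect a genuine obstacle here, since all the hard analytic work — the renorming in Propositions \ref{main prop 1} and \ref{main prop 2} and the transfer back to $X$ in Proposition \ref{main prop 3} — is already done. The only point requiring a little care is the boundary case $r^*(X)=0$: one must be sure that the lower bound $\gamma^*(X)\geq \frac{2}{1+r^*(X)}$ from \cite{CMPP4} is valid at $r^*(X)=0$ (it is, since the stated range is $[0,1]$), so that together with Lim's upper bound it pins down $\gamma^*(c_0)=2$ without appealing to Proposition \ref{main prop 3}, whose hypothesis $r^*(X)\in(0,1)$ excludes $c_0$. With that observation noted, the assembly is immediate.

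Thus, assembling the pieces: for every predual $X$ of $\ell_1$ with the $\sigma(\ell_1,X)$-fpp we have $r^*(X)\in[0,1]$, hence $\gamma^*(X)\geq\frac{2}{1+r^*(X)}$; and the matching upper bound holds in each of the three cases above, giving $\gamma^*(X)=\frac{2}{1+r^*(X)}$.
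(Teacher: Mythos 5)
Your proposal is correct and is essentially the paper's own argument: the paper proves the theorem in one line by ``combining remarks made in the introduction with the thesis of Proposition \ref{main prop 3}'', which is exactly the case analysis on $r^*(X)\in\{0\}$, $(0,1)$, $\{1\}$ that you spell out (lower bound from \cite{CMPP4} in all cases; upper bound from Lim's theorem, Proposition \ref{main prop 3}, and Theorem 3.4 of \cite{CMPP4}, respectively). Your version merely makes explicit the bookkeeping the paper leaves implicit, including the correct observation that the boundary case $r^*(X)=0$ is not covered by Proposition \ref{main prop 3} and must be handled via $X=c_0$ and Lim's bound.
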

We recall that stability property for the $\sigma(\ell_1,c_0)$-fpp was already investigated in \cite{Soardi}. The important point to note here is the difference between our notion of stability and the approach developed in \cite{Soardi}. Indeed, in that paper the author considered only the $\sigma(\ell_1,c_0)$-topology on every renorming of $\ell_1$.

\section{Stability in the restricted framework of preduals of $\ell_1$: the case of $c_0$}\label{Sec stability restricted}

In this section we deal with the stability of the $w^*$-fpp in the restricted framework of Lindenstrauss spaces. Namely, for a predual $X$ of $\ell_1$ that enjoys the $\sigma(\ell_1,X)$-fpp, we are interested in the estimation of the following constant
$$
\eta^*(X)=\sup \left\lbrace\eta \geq 1:\,Y^*=\ell_1,\,d(X,Y) \leq \eta \Rightarrow Y^*\,\textrm{ has the }\,\sigma(\ell_1,Y)\textrm{-fpp}\right\rbrace.     
$$
We restrict our attention to the case when $X=c_0$.
The first step in this matter is the easy remark that $\eta^*(c_0)\leq 3$. Indeed, we know that $c$ does not satisfy the $\sigma (\ell_1,c)$-fpp and $d(c,c_0)=3$ by  the result in \cite{Cambern1968}.
Since $\gamma^*(c_0)\leq \eta^*(c_0)$, it follows from the previous section that $2\leq \eta^*(c_0) \leq 3$. In the sequel we prove that $\eta^*(c_0)=3$. 
In order to fix some notations we recall a well-known theorem.

\begin{thm} (see, e.g., \cite{Megg}). Let $T:X \rightarrow Y$ be a linear map from the Banach space $X$ onto the Banach space $Y$. Then there exists a linear map $\tilde{T}: X/\ker T \rightarrow Y$ such that
\begin{enumerate}
\item $\tilde{T}$ is an onto isomorphism,
\item $T=\tilde{T}\pi$, where $\pi:X\rightarrow X/\ker T$ denotes the quotient map,
\item $\| T \|=\| \tilde{T} \|$.
\end{enumerate}
\end{thm}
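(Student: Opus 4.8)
The plan is to carry out the classical first–isomorphism–theorem construction and then invoke the open mapping theorem. First I would define $\tilde T\colon X/\ker T\to Y$ by $\tilde T(x+\ker T)=T(x)$. Well-definedness is immediate: if $x+\ker T=x'+\ker T$, then $x-x'\in\ker T$, so $T(x)=T(x')$. Linearity of $\tilde T$ follows from that of $T$, and the identity $\tilde T\pi=T$ holds by construction, since $\tilde T(\pi(x))=\tilde T(x+\ker T)=T(x)$, which gives (2). Injectivity is clear, because $\tilde T(x+\ker T)=0$ forces $T(x)=0$, i.e. $x\in\ker T$, so $x+\ker T$ is the zero coset; surjectivity of $\tilde T$ is inherited from the surjectivity of $T$. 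Note that boundedness of $T$ (implicit, since $\|T\|$ appears in (3)) makes $\ker T$ norm-closed, so $X/\ker T$ is itself a Banach space and carries the quotient norm $\|x+\ker T\|=\inf\{\|x-z\|:z\in\ker T\}$.

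For (3) I would establish both inequalities. On one hand, for every $z\in\ker T$ we have $\|\tilde T(x+\ker T)\|=\|T(x-z)\|\le\|T\|\,\|x-z\|$; taking the infimum over $z\in\ker T$ yields $\|\tilde T(x+\ker T)\|\le\|T\|\,\|x+\ker T\|$, hence $\|\tilde T\|\le\|T\|$. On the other hand, since the quotient map satisfies $\|\pi\|\le 1$, we get $\|T(x)\|=\|\tilde T(\pi(x))\|\le\|\tilde T\|\,\|\pi(x)\|\le\|\tilde T\|\,\|x\|$, so $\|T\|\le\|\tilde T\|$; therefore $\|T\|=\|\tilde T\|$, and in particular $\tilde T$ is bounded. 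Finally, $\tilde T$ is a bounded linear bijection between the Banach spaces $X/\ker T$ and $Y$, so the open mapping theorem (equivalently, the bounded inverse theorem) gives that $\tilde T^{-1}$ is bounded, which is (1).

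The argument is entirely routine; the only point that deserves attention is the last step, where one must first observe that $\ker T$ is closed — so that $X/\ker T$ is complete — before the open mapping theorem can be applied to conclude that $\tilde T$ is a topological isomorphism and not merely an algebraic one.
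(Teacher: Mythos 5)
Your proof is correct and is exactly the standard first--isomorphism--theorem argument (quotient by the closed kernel, norm equality via the quotient norm, open mapping theorem for the inverse) that the paper itself does not reproduce but simply cites from Megginson's textbook. Nothing is missing; the one subtle point you rightly flag --- closedness of $\ker T$ so that $X/\ker T$ is complete before invoking the open mapping theorem --- is handled properly.
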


In order to prove the main theorem of this section we need some preliminary results.
 
First, we state the following lemma. The proof is easy and therefore we leave it to the reader.
\begin{lem}\label{Lemma Sec 3}
	Let $T:X \to Y$ be an onto bounded linear operator, where $Y\neq \left\lbrace 0\right\rbrace $. 
	Then $$ \sup \left\lbrace \delta >0: \delta B_Y \subseteq T(B_X) \right\rbrace =\| \tilde{T}^{-1}\|^{-1}.$$
\end{lem}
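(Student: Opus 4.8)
The plan is to use the factorization $T=\tilde T\pi$ supplied by the theorem quoted just above, together with the elementary fact that the quotient map $\pi\colon X\to X/\ker T$ carries the \emph{open} unit ball of $X$ onto the open unit ball of $X/\ker T$. Write $\delta_0=\sup\{\delta>0:\delta B_Y\subseteq T(B_X)\}$ and $c=\|\tilde T^{-1}\|^{-1}$, and note at the outset that $0<c<\infty$ because $\tilde T$ is an onto isomorphism between the nonzero Banach spaces $X/\ker T$ and $Y$. I will establish the two inequalities $\delta_0\le c$ and $\delta_0\ge c$ separately.

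For $\delta_0\le c$, I would take any $\delta>0$ with $\delta B_Y\subseteq T(B_X)$. Given $y\in B_Y$, choose $x\in B_X$ with $Tx=\delta y$; then $\tilde T^{-1}(\delta y)=\pi(x)$, so $\|\tilde T^{-1}(\delta y)\|\le\|x\|\le 1$. Since this holds for every $y\in B_Y$, we get $\delta\|\tilde T^{-1}\|\le 1$, i.e. $\delta\le c$, and passing to the supremum yields $\delta_0\le c$.

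For $\delta_0\ge c$, I would fix $\delta$ with $0<\delta<c$ and let $y\in B_Y$. Then $\|\tilde T^{-1}(\delta y)\|\le\delta\|\tilde T^{-1}\|<1$, so $\tilde T^{-1}(\delta y)$ lies in the open unit ball of $X/\ker T$, which equals $\pi(\{x\in X:\|x\|<1\})\subseteq\pi(B_X)$. Hence $\tilde T^{-1}(\delta y)=\pi(x)$ for some $x\in B_X$, and $Tx=\tilde T\pi(x)=\delta y$. Thus $\delta B_Y\subseteq T(B_X)$, so $\delta\le\delta_0$; letting $\delta\uparrow c$ gives $c\le\delta_0$. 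Combining the two bounds yields $\delta_0=c=\|\tilde T^{-1}\|^{-1}$.

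The only point that needs a little care — and, incidentally, the reason the set defining the supremum is nonempty in the first place — is the distinction between the closed and open unit balls of the quotient: $\pi(B_X)$ need not be closed, but it does contain the open unit ball of $X/\ker T$, which is precisely what the second inequality exploits, whereas the first inequality only uses the trivial estimate $\|\pi(x)\|\le\|x\|$. Everything else is routine bookkeeping, so I do not expect a genuine obstacle here; this matches the authors' remark that the proof may be left to the reader.
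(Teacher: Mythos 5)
Your proof is correct. The paper itself omits the argument ("the proof is easy and therefore we leave it to the reader"), and what you give — factoring $T=\tilde{T}\pi$, using $\|\pi(x)\|\le\|x\|$ for the upper bound and the fact that $\pi$ maps the open unit ball of $X$ onto the open unit ball of $X/\ker T$ for the lower bound — is exactly the standard argument the authors evidently had in mind, with the one genuinely delicate point (open versus closed balls under the quotient map) handled properly.
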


%\begin{proof}
%	$$\delta B_Y \subset T(B_X)=\tilde{T}\pi (B_X)$$
%	$$\delta \tilde{T}^{-1}(B_Y) \subseteq \pi (B_X)=B_{X/Y}$$
%	$$\| \tilde{T}^{-1}(x)\|_{X/Y}\leq \frac1\delta\|x\|_Y$$
%\end{proof}

Moreover, the proof of our result relies on the following theorem by Alspach.

\begin{thm}(\cite{A1979})\label{Al}
	Le $X$ be a quotient of $c_0$. Then, for every $\epsilon >0$, there is a subspace $Y$ of $c_0$ such that $d(X,Y) < 1 + \epsilon$.
\end{thm}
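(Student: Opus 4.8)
The plan is to reformulate the statement as the construction of a $\sigma(X^*,X)$-null, almost-norming sequence of functionals, and then to build that sequence by a blocking argument on the quotient map. Write $X=c_0/N$ and let $Q\colon c_0\to X$ be the quotient map. The first observation is the standard criterion for almost isometric embeddability into $c_0$: a separable space $X$ admits, for a given $\epsilon>0$, a subspace $Y\subset c_0$ with $d(X,Y)\le 1+\epsilon$ if and only if there is a sequence $(f_j)\subset B_{X^*}$ which is $\sigma(X^*,X)$-null and \emph{$(1+\epsilon)$-norming}, that is $\sup_j|f_j(x)|\ge (1+\epsilon)^{-1}\|x\|$ for all $x\in X$; the embedding is then $Tx=(f_j(x))_j$, whose image is the desired $Y$ (applying the criterion with a slightly smaller tolerance yields the strict inequality $d(X,Y)<1+\epsilon$ required in the statement). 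Thus the whole problem reduces to producing such a sequence inside $B_{X^*}$.

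Next I would record the two facts about the quotient that drive the construction. Put $x_k=Qe_k$, where $(e_k)$ is the unit vector basis of $c_0$. Since $e_k\to 0$ weakly in $c_0$, the sequence $(x_k)$ is weakly null and lies in $B_X$, and it has dense linear span. Moreover $Q^*\colon X^*\to\ell_1$ is an isometry, which amounts to the identity
\[
\sum_{k=1}^{\infty}\left|f(x_k)\right|=\|f\|_{X^*}\qquad(f\in X^*),
\]
with $f(x_k)$ equal to the $k$-th coordinate of the $\ell_1$-representative of $f$. Two consequences matter. First, because the topology $\sigma(X^*,X)$ is the restriction to $N^\perp$ of $\sigma(\ell_1,c_0)$, a bounded sequence $(f_j)$ is $\sigma(X^*,X)$-null precisely when $f_j(x_k)\to 0$ as $j\to\infty$ for each fixed $k$. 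Second, the displayed identity shows that the mass of every functional is $\ell_1$-summable over the index $k$, so each $f\in B_{X^*}$ has a negligible tail beyond some finite stage. This is exactly the room needed to slide supports outward.

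The construction itself I would carry out by a gliding-hump/blocking scheme. Enumerate a dense sequence $(x^{(i)})$ in $S_X$ together with an auxiliary null sequence of tolerances. Processing the points in turn, at each step I choose an almost-norming functional for the current point, and then---using the summability identity to discard its small tail and the weak nullity of $(x_k)$ to re-norm after truncation---I replace it by a functional in $B_{X^*}$ whose essential support (the indices $k$ at which $f(x_k)$ is non-negligible) lies beyond a stage $k_j\to\infty$. Arranging consecutive blocks $I_1<I_2<\cdots$ of $k$-indices and keeping the functionals built for block $I_\ell$ essentially supported on $I_\ell$, the resulting sequence is $\sigma(X^*,X)$-null, because each fixed coordinate is eventually untouched, while the suprema still dominate $(1+\epsilon)^{-1}\|\cdot\|$ on the dense set, and hence on all of $X$.

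The hard part will be the replacement step: moving the essential support of a norming functional out to a far block \emph{while keeping it inside} $N^\perp$ and without losing more than a factor $1+\epsilon$ of its norming value. Truncating an element of $N^\perp$ in $\ell_1$ generally leaves $N^\perp$, so the truncated functional must be corrected back into $N^\perp$ by an approximate projection whose error is controlled by the (small) discarded tail and absorbed into the slack. This is also where the bound $1+\epsilon$ rather than $1$ is forced: the obstruction to an exactly isometric embedding is precisely the presence of nonzero $\sigma(X^*,X)$-cluster points of $\textrm{ext}(B_{X^*})$---the phenomenon that makes, for instance, $c$ fail to embed isometrically, and indeed fail to be a metric quotient of $c_0$---and the correction above is what trades such clustering for an arbitrarily small loss in norm.
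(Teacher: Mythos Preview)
The paper does not prove this statement at all; it is quoted verbatim from Alspach \cite{A1979} and used as a black box in the proof of Proposition~\ref{Prop Sec 3}. So there is no ``paper's own proof'' to compare against, and your proposal should be weighed only against Alspach's original argument and on its internal merits.

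Your reformulation is sound: a separable Banach space $X$ embeds $(1+\epsilon)$-isomorphically into $c_0$ exactly when $B_{X^*}$ contains a $\sigma(X^*,X)$-null $(1+\epsilon)$-norming sequence, and for $X=c_0/N$ the identification $X^*=N^\perp\subset\ell_1$ and the coordinatewise description of the weak$^*$ topology are correct. The gap, however, is precisely the step you flag as ``the hard part'' and then leave undone. Truncating $f\in N^\perp$ to its tail generally throws it out of $N^\perp$, and there is no free-floating ``approximate projection'' back onto $N^\perp$ with operator norm close to $1$: the subspace $N^\perp$ is in general uncomplemented in $\ell_1$, and even if a projection exists there is no reason it should act nearly as the identity on vectors of small head. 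Equivalently, on the predual side you are implicitly claiming that every $x\in S_X$ has an almost-isometric $c_0$-lift supported beyond any prescribed index, and this is exactly what fails for a generic quotient; without it the gliding-hump scheme stalls at step one.

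Alspach's actual argument avoids this obstacle by going through local structure: a quotient of $c_0$ is an $\mathcal{L}_{\infty,1+\epsilon}$-space for every $\epsilon>0$, so one can build an increasing chain of finite-dimensional subspaces each $(1+\epsilon)$-isomorphic to some $\ell_\infty^{n}$ whose union is dense, and then assemble the embedding into $c_0$ blockwise from these local $\ell_\infty^{n}$ pieces. If you want to rescue your outline, the missing ingredient is precisely this $\mathcal{L}_{\infty,1+\epsilon}$ property (or an equivalent statement such as the almost-isometric lifting of finite-dimensional subspaces through $Q$), which is what ultimately supplies the ``corrected'' functionals you need.
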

Finally, the last result that plays a role in our argument can be viewed as an extension of Cambern's result (\cite{Cambern1968}) about the distance from $c_0$ to $c$.
\begin{prop}\label{Prop Sec 3}
	Let $X$ be a Banach space containing $c$ and let ${T:c_0 \to X}$ be an onto linear operator with $\| T \|=1$. Then $\| \tilde{T}^{-1}\| \geq 3$.
\end{prop}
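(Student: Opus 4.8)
The plan is to mimic and refine Cambern's original lower-bound argument for $d(c_0,c)$, but now working with an arbitrary onto operator $T:c_0\to X$ rather than an isomorphism onto $c$. Write $\tilde T:c_0/\ker T\to X$ for the induced isomorphism, so that $\|\tilde T\|=\|T\|=1$. Suppose toward a contradiction that $\|\tilde T^{-1}\|<3$; equivalently, by Lemma \ref{Lemma Sec 3}, there is some $\delta>2/3$ (after scaling we may as well normalize so that $\|\tilde T^{-1}\|^{-1}=\delta$) with $\delta B_X\subseteq T(B_{c_0})$, while $T(B_{c_0})\subseteq B_X$. Thus every $x\in X$ with $\|x\|\le\delta$ has a preimage in $B_{c_0}$, and conversely $T$ is a norm-one map.

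The key step is to exploit the copy of $c$ sitting inside $X$. Let $\mathbf 1\in c$ be the constant sequence $1$ and let $(f_k)$ be the standard unit vectors of $c\subset X$; note $\|\mathbf 1\|=1$ and $\|\mathbf 1-2f_k\|=1$ for every $k$ (coordinatewise this is the sequence that is $-1$ in slot $k$ and $1$ elsewhere). Pick a preimage $u\in c_0$ of $\delta\mathbf 1$ with $\|u\|_\infty\le 1$, and for each $k$ a preimage $v_k\in c_0$ of $\delta(\mathbf 1-2f_k)=\delta\mathbf 1-2\delta f_k$ with $\|v_k\|_\infty\le 1$. Then $T(u-v_k)=2\delta f_k$, so $\|u-v_k\|_\infty\ge 2\delta$ since $\|T\|\le 1$. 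Because $u\in c_0$, there is an index $N$ with $|u(i)|<\varepsilon$ for all $i\ge N$; the vectors $v_k$, however, live in the closed ball of $c_0$ and must stay far from $u$. The contradiction will come from a counting/pigeonhole argument in finitely many coordinates, exactly as in \cite{Cambern1968}: since each $u-v_k$ must have a coordinate of modulus close to $2\delta>4/3$, and in the coordinates $\ge N$ we have $|u(i)-v_k(i)|<\varepsilon+1$, one forces such a coordinate to lie among the first $N-1$; but two distinct $v_k,v_\ell$ also satisfy $T(v_k-v_\ell)=2\delta(f_\ell-f_k)$, hence $\|v_k-v_\ell\|_\infty\ge 2\delta$, and one cannot pack infinitely many points of the ball of $\R^{N-1}$ that are pairwise $2\delta$-separated once $2\delta>1$. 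This is the same geometric obstruction Cambern uses; the role of "$\|T\|=1$ onto" is precisely to transport the separation in $X$ back to separation in $c_0$, and the role of "$X\supseteq c$" is to supply an infinite $2\delta$-separated family (the $\mathbf 1-2f_k$) in the ball of $X$ scaled by $\delta$.

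I would organize this as: (i) reduce via Lemma \ref{Lemma Sec 3} to the statement "if $\delta B_X\subseteq T(B_{c_0})$ and $\|T\|\le 1$ then $\delta\le 1/3$"; (ii) choose preimages $u,v_k$ as above and record the two separation inequalities $\|u-v_k\|_\infty\ge 2\delta$ and $\|v_k-v_\ell\|_\infty\ge 2\delta$ ($k\ne\ell$); (iii) use $u\in c_0$ to truncate to a finite coordinate block and run the pigeonhole to get $2\delta\le 1$, i.e.\ $\|\tilde T^{-1}\|\ge 3$. The main obstacle is step (iii): making the truncation estimate precise so that the "tail" coordinates genuinely cannot host the large coordinate of $u-v_k$, and then extracting the clean separated finite family — this is where Cambern's original computation must be adapted to the fact that we only control $\|T\|$ from above and $\delta B_X\subseteq T(B_{c_0})$, rather than having a two-sided isometric-type estimate. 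Everything else is bookkeeping with the quotient map and Lemma \ref{Lemma Sec 3}.
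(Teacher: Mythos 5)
Your reduction via Lemma \ref{Lemma Sec 3} is fine, but the core of your plan, step (iii), does not deliver the constant $3$ and in fact breaks down at three points. First, the arithmetic: $\|\tilde{T}^{-1}\|<3$ gives $\delta=\|\tilde{T}^{-1}\|^{-1}>1/3$, not $\delta>2/3$, so under the contradiction hypothesis you only know $2\delta>2/3$. Second, your truncation needs $2\delta>1+\varepsilon$ in order to push the large coordinate of $u-v_k$ into the head, because for $i\geq N$ the only available bound is $|u(i)-v_k(i)|\leq\varepsilon+|v_k(i)|\leq\varepsilon+1$; hence even with correct bookkeeping this mechanism can only rule out $\delta>1/2$, i.e.\ it proves $\|\tilde{T}^{-1}\|\geq 2$, not $\geq 3$. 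Third, the pairwise separation $\|v_k-v_\ell\|_\infty\geq 2\delta$ lives in the full $c_0$-norm and may perfectly well be realized in the tail coordinates $i\geq N$, where you have no smallness for the $v_k$ (only for $u$); so the restrictions of the $v_k$ to the first $N-1$ coordinates need not be separated at all and the compactness/pigeonhole argument does not apply. These are not repairable by bookkeeping: every ``primal'' argument of this type (an infinite weak-null separated family $\mathbf{1}-2f_k$ pulled back into $B_{c_0}$) saturates at the constant $2$. The constant $3$ in the Cambern--Gordon theorem comes from a different mechanism, essentially dual: one works with preimages $g,h_j\in B_{\ell_1}=B_{c_0^*}$ of the limit functional and of the coordinate functionals of $c$, notes that $h_j\to g$ in $\sigma(\ell_1,c_0)$, and uses the $\ell_1$ head-plus-tail inequality $\liminf_j\|h_j\|_1\geq\|g\|_1+\liminf_j\|h_j-g\|_1$ together with $\|g\|_1\geq 1/M$ and $\|h_j-g\|_1\geq 2/M$ to get $1\geq 3/M$. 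Your sketch contains no ingredient playing the role of the extra term $\|g\|_1\geq 1/M$.

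The paper does not re-prove any Cambern-type estimate. It applies Alspach's theorem (Theorem \ref{Al}) to replace the quotient $c_0/\ker T$ by an almost isometric subspace $Y$ of $c_0$, so that the restriction of $S\tilde{T}^{-1}$ to the copy of $c$ inside $X$ becomes an isomorphic embedding of $c$ into $c_0$, and then quotes Gordon's theorem that any such embedding has distortion at least $3$; the chain $3\leq\|R\|\,\|R^{-1}\|\leq(1+\epsilon)\|\tilde{T}^{-1}\|$ finishes the proof. If you want a self-contained argument along your lines, you must import the full Cambern--Gordon mechanism (for instance the dual $\ell_1$ argument sketched above, applied to the adjoint of your quotient map), not merely the separation of the preimages $v_k$.
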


\begin{proof}
 By Theorem \ref{Al} we know that there exists a subspace $Y$ of $c_0$ and an isomorphism ${S:c_0/\ker(T) \to Y}$ such that $\|S\| \, \| S^{-1}\| < 1+\epsilon$. Let $\tilde{T}:c_0/\ker(T) \to X$ and denote by $R$ the restriction of $S\tilde{T}^{-1} $ to $c$. Then $R$ is an isomorphism from $c$ into $c_0$ and by Theorem 2.1 in \cite{Gordon1970} it holds  
 $$3 \leq \|R \| \,\|R^{-1} \| \leq \|S\tilde{T}^{-1} \|\, \| (S\tilde{T}^{-1})^{-1}\| \leq \|\tilde{T}^{-1}\|\, \| S \|\, \| \tilde{T}\|\, \| S^{-1}\| \leq (1+\epsilon)\| \tilde{T}^{-1}\|.$$
Hence we have $\| \tilde{T}^{-1}\|\geq 3$.
\end{proof}
Now, we are in position to prove the main result of this section. It allows us to obtain the equality $\eta^* (c_0)=3$.
\begin{thm}
	Let $X$ be a predual of $\ell_1$ isomorphic to $c_0$. Suppose that $X^*$ fails the $w^*$-fpp. If $T: X \to c_0$ is an onto isomorphism with $\|T^{-1}\|=1$, then $\| T \| \geq 3$.
\end{thm}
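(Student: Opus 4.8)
The plan is to turn the hypothesis on the $w^*$-fpp into a structural statement about $X$ and then to recognise the conclusion as an immediate consequence of Proposition \ref{Prop Sec 3} applied to the inverse operator $T^{-1}\colon c_0\to X$. The crucial (and only substantial) step is this: since $X$ is a predual of $\ell_1$ and $X^*=\ell_1$ fails the $\sigma(\ell_1,X)$-fpp, the characterisation of the $w^*$-fpp for preduals of $\ell_1$ in \cite{Casini-Miglierina-Piasecki2015} forces $X$ to contain a subspace linearly isometric to $c$. Indeed, the failure of that property produces a point $e^*\in(\textrm{ext}(B_{\ell_1}))'$ with $|e^*|_{\ell_1}=1$, reached as the $\sigma(\ell_1,X)$-limit of a subsequence of the canonical basis of $\ell_1$, and this configuration in $X^*$ is precisely the dual side of an isometric copy of $c$ sitting inside $X$. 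In the terminology of Proposition \ref{Prop Sec 3}, $X$ \emph{contains} $c$.

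Granting this, consider $T^{-1}\colon c_0\to X$: it is an onto isomorphism, hence injective, so $\ker(T^{-1})=\{0\}$. Identifying $c_0/\ker(T^{-1})$ with $c_0$, the isomorphism $\widetilde{T^{-1}}$ furnished by the standard factorisation through the quotient is $T^{-1}$ itself, so $(\widetilde{T^{-1}})^{-1}=T$ and $\|(\widetilde{T^{-1}})^{-1}\|=\|T\|$. Since $\|T^{-1}\|=1$, Proposition \ref{Prop Sec 3} — applied with the ambient Banach space taken to be $X$ (which contains $c$ by the previous paragraph) and with the onto, norm-one operator taken to be $T^{-1}\colon c_0\to X$ — yields $\|(\widetilde{T^{-1}})^{-1}\|\ge 3$, that is, $\|T\|\ge 3$. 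One may also phrase the last step through Lemma \ref{Lemma Sec 3}: it gives $\|(\widetilde{T^{-1}})^{-1}\|^{-1}=\sup\{\delta>0:\delta B_X\subseteq T^{-1}(B_{c_0})\}$, while $T(B_X)\subseteq\|T\|\,B_{c_0}$ forces $\frac{1}{\|T\|}B_X\subseteq T^{-1}(B_{c_0})$, so $\frac{1}{\|T\|}\le\frac{1}{3}$.

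The main obstacle is the first paragraph: one must extract an \emph{honest} isometric copy of $c$ in $X$ — not merely an almost isometric one — out of the bare assumption that $X^*$ lacks the $w^*$-fpp, since an $\epsilon$-distorted copy would only deliver the weaker bound $\|T\|\ge 3-\epsilon'$. Everything afterwards is formal, the only remark being that the bijectivity of $T^{-1}$ trivialises the canonical quotient map and so converts Proposition \ref{Prop Sec 3} directly into the assertion. This theorem then gives $\eta^*(c_0)=3$: together with the elementary inequality $\eta^*(c_0)\le 3$ (which comes from $d(c,c_0)=3$ and the failure of the $\sigma(\ell_1,c)$-fpp), it shows that no predual $X$ of $\ell_1$ with $d(X,c_0)<3$ can have $X^*$ failing the $w^*$-fpp, because normalising a nearly optimal isomorphism so that $\|T^{-1}\|=1$ would contradict $\|T\|\ge 3$.
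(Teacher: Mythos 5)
Your reduction to Proposition \ref{Prop Sec 3} is formally fine once you assume that $X$ contains an isometric copy of $c$, but that assumption --- which you yourself flag as ``the main obstacle'' --- is exactly the gap, and it is not merely unproved: it points in the wrong direction. The configuration you extract from the failure of the $w^*$-fpp (a subsequence $(e^*_{n_k})$ of the standard basis converging $\sigma(\ell_1,X)$ to some $e^*$ with $|e^*|_{\ell_1}=1$) spans a weak$^*$-closed subspace $Z$ of $\ell_1=X^*$, and by duality $Z=(X/{}^\perp Z)^*$; so what Theorem 4.1 of \cite{Casini-Miglierina-Piasecki2015} actually delivers is a \emph{quotient} of $X$ isometric to a hyperplane $W_\alpha$ with $|\alpha|_{\ell_1}=1$, not a subspace of $X$ isometric to $c$. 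A subspace of a quotient of $X$ does not in general lift to a subspace of $X$, so you cannot apply Proposition \ref{Prop Sec 3} to $T^{-1}\colon c_0\to X$ as you do; the paper instead applies it to the composite $\pi T^{-1}\colon c_0\to X/Y\cong W_\alpha$ (where $\pi$ is the quotient map), checking via Lemma \ref{Lemma Sec 3} that $\bigl\|\bigl(\widetilde{\pi T^{-1}}\bigr)^{-1}\bigr\|^{-1}\ge \|T\|^{-1}$.

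There is a second, independent gap: even the quotient $W_\alpha$ need not contain an isometric copy of $c$. By Proposition 2.1 of \cite{Casini-Miglierina-Piasecki2015} this holds when $\alpha$ is finitely supported (up to the tail-sum modification), but not for general $\alpha\in S_{\ell_1}$; the paper handles this by replacing $\alpha$ with $\alpha_n=\bigl(\alpha(1),\dots,\alpha(n),\sum_{i>n}|\alpha(i)|,0,\dots\bigr)$ and using Lemma \ref{approximation_lemma} to get $d(W_\alpha,W_{\alpha_n})\to 1$, which transfers the bound in the limit. (Incidentally, your worry that an almost-isometric copy would only yield $\|T\|\ge 3-\epsilon'$ is not the real problem --- since $\epsilon$ is arbitrary and $T$ is fixed, that would still give $\|T\|\ge 3$; the real problem is subspace versus quotient.) Your closing paragraph on how the theorem yields $\eta^*(c_0)=3$ is correct, but the proof of the theorem itself needs the quotient route.
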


\begin{proof}
	By Theorem 4.1 in \cite{Casini-Miglierina-Piasecki2015} there exists a quotient $X/Y$ of $X$ that is isometric to a space $W_\alpha$ with $|\alpha|_{\ell_1}=1$. Let us first suppose in addition that $W_\alpha$ contains a subspace isometric to $c$. Now, let us consider the onto map: $\pi T^{-1}: c_0 \to W_\alpha$, where $\pi:X\rightarrow X/Y$ is the quotient map. 
	It is easy to check that
	%\label{Inclusion Theorem Sec 3}
	%$\pi T^{-1}(B_{c_0}) \supseteq \frac{1}{\| T\|}B_{W_\alpha}$.
			%$$\pi T^{-1}(B_{c_0}) \supseteq \frac{1}{\| T\|}B_{W_\alpha}.$$
%$$\|Tx\| \leq \| T\| \; \|x\|$$
%$$ B_{c_0} \supseteq \frac{1}{\| T\|}T(B_X)$$	$$T^{-1}(B_{c_0}) \supseteq \frac{1}{\| T\|}B_X$$
	$\pi T^{-1}(B_{c_0}) \supseteq \frac{1}{\| T\|+\varepsilon}B _{W_\alpha}$, for every $\varepsilon >0$.
	Hence, by applying Lemma \ref{Lemma Sec 3}, we obtain
	$$\left\| \left( \widetilde{ \pi T^{-1}}\right)^{-1} \right\| ^{-1} \geq \dfrac{1}{\|T\|}.$$
	Moreover, since $\|\pi T^{-1} \|=1$,  Proposition \ref{Prop Sec 3} gives $\left\| \left( \widetilde{ \pi T^{-1}}\right)^{-1} \right\| \geq 3$. Therefore $\| T \| \geq 3$.
	
	Now we consider a space $W_\alpha$ such that $c$ is not included in. Let us denote  $\left(\alpha(1),\ldots,\alpha(n),\sum_{i=n+1}^{+\infty}\left|\alpha(i)\right|,0,0,\ldots \right)$ by $\alpha_n$. Then, clearly $\lim_{n \rightarrow \infty} |\alpha-\alpha_n|_{\ell_1}=0$. Moreover, by Proposition 2.1 in \cite{Casini-Miglierina-Piasecki2015}, the space $W_{\alpha_n}$ contains an isometric copy of $c$, for every $n$.  By Lemma \ref{approximation_lemma}, we have that $\lim_{n \rightarrow \infty} d(W_\alpha,W_{\alpha_n})=1$, which completes the proof.
	
\end{proof}

We recall that, as already said, the space $c$ is such that $\ell_1$ lacks the $\sigma(\ell_1, c)$-fpp and $d(c_0,c)=3$. Therefore, the equality $\eta^*(c_0)=3$ follows directly from the previous theorem.

On the other hand, we have an example of a predual $X$ of $\ell_1$ such that the $\sigma(\ell_1,X)$-fpp holds and $d(c_0,X)=3$. This example is based on a space belonging to the class of hyperplanes of $c$ already considered in Section \ref{Sec Stability}. 
We begin with a general result on all the hyperplanes $W_\alpha$ with $\alpha \in S_{\ell_1}$.
\begin{prop}\label{prop_dist_3}
If $\alpha \in S_{\ell_1}$, then $d(c_0,W_\alpha)=3$.
\end{prop}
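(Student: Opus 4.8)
The goal is to show that for $\alpha\in S_{\ell_1}$ the Banach--Mazur distance $d(c_0,W_\alpha)$ equals $3$. I would prove this by establishing both inequalities separately.

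For the upper bound $d(c_0,W_\alpha)\le 3$, the natural strategy is to exhibit an explicit isomorphism between $c_0$ and $W_\alpha$ of distortion at most $3$. The map $\phi$ already written out in the preamble macro \verb|\z| is the obvious candidate: sending $x=(x(1),x(2),\dots)\in c_0$ to
\[
\phi(x)=\Bigl(\bigl(1+|\alpha|_{\ell_1}\bigr)x(1),\dots,\bigl(1+|\alpha|_{\ell_1}\bigr)x(n),\,x(n+1)-\textstyle\sum_{i=1}^n\alpha(i)x(i),\,x(n+2)-\sum_{i=1}^n\alpha(i)x(i),\dots\Bigr),
\]
for a suitably chosen $n$ (taken so that $\sum_{i>n}|\alpha(i)|$ is small), where the tail coordinates are adjusted so that $\phi(x)$ actually lies in $W_\alpha$, i.e. satisfies $\lim_i \phi(x)(i)=\sum_i\alpha(i)\phi(x)(i)$. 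One checks $\phi$ maps $c_0$ onto $W_\alpha$ (a convergent sequence with the defining linear relation can be hit by solving for $x$), and then estimates $\|\phi(x)\|_\infty$ from above and below in terms of $\|x\|_\infty$. Since $|\alpha|_{\ell_1}=1$, the leading block contributes a factor $2$ and the shifted-tail block contributes a factor essentially $1+\sum_{i>n}|\alpha(i)|+\sum_{i\le n}|\alpha(i)|$ one way and $1-\varepsilon$ the other; balancing these gives distortion arbitrarily close to, and then equal to, $3$. (Alternatively one invokes Theorem 4.3 in \cite{Casini-Miglierina-Piasecki2014} together with Lemma \ref{approximation_lemma} to reduce to $\alpha$ finitely supported, where the computation is transparent, and then passes to the limit.)

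For the lower bound $d(c_0,W_\alpha)\ge 3$, I would argue that $W_\alpha$ contains, or is close to containing, an isometric copy of $c$, and then invoke Cambern's theorem $d(c_0,c)=3$ together with the fact that Banach--Mazur distance does not decrease under passing to subspaces in the appropriate sense. More precisely: if $W_\alpha$ contains an isometric copy of $c$ and $S:c_0\to W_\alpha$ is an isomorphism, then restricting $S^{-1}$ (or composing with the embedding) produces an isomorphism between $c$ and a subspace of $c_0$, whence by Theorem 2.1 in \cite{Gordon1970} (the same estimate already used in the proof of Proposition \ref{Prop Sec 3}) its distortion is at least $3$, forcing $\|S\|\,\|S^{-1}\|\ge 3$. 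When $W_\alpha$ does not literally contain $c$, I would use exactly the approximation trick from the proof of the preceding theorem: replace $\alpha$ by $\alpha_n=(\alpha(1),\dots,\alpha(n),\sum_{i>n}|\alpha(i)|,0,0,\dots)$, note $W_{\alpha_n}$ does contain an isometric copy of $c$ by Proposition 2.1 in \cite{Casini-Miglierina-Piasecki2015}, apply the lower bound there, and let $n\to\infty$ using $\lim_n d(W_\alpha,W_{\alpha_n})=1$ from Lemma \ref{approximation_lemma}.

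**Main obstacle.** The delicate point is the upper bound: one must verify that the explicit map $\phi$ genuinely lands in $W_\alpha$ and is onto, and then carry out the two-sided norm estimate carefully enough to get the constant $3$ exactly rather than $3+\varepsilon$ or $3-\varepsilon$. Getting the constant exactly $3$ will require either a clean closed-form choice of the tail-correction term together with an exact $\varepsilon=0$ computation in the finitely-supported case, or a compactness/limiting argument showing the infimum of distortions over admissible isomorphisms is attained and equals $3$. The lower bound is comparatively routine given the machinery already developed in this section (Gordon's estimate and the $W_{\alpha_n}$-approximation), so I expect essentially all the real work to be in pinning down the isomorphism and its norm.
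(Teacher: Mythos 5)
Your proposal is correct and follows essentially the same route as the paper: the explicit map $\phi$ (the paper writes it as $W_\alpha\to c_0$ rather than $c_0\to W_\alpha$, but it is the same isomorphism) gives $\|\phi\|\,\|\phi^{-1}\|=3$ for finitely supported $\alpha\in S_{\ell_1}$, Gordon's Theorem 2.1 together with the isometric copy of $c$ in $W_\alpha$ gives the lower bound, and the general case is handled by approximating with $\alpha_n$ and Lemma \ref{approximation_lemma} exactly as you describe. Your worry about getting the constant exactly $3$ rather than $3\pm\varepsilon$ is moot, since the Banach--Mazur distance is an infimum and the limiting argument $d(c_0,W_{\alpha_n})=3$, $d(W_\alpha,W_{\alpha_n})\to 1$ pins it down.
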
 
 \begin{proof}
Let us consider the space $W_\alpha$ where $\alpha=(\alpha(1),\ldots,\alpha(n),0,0,\ldots)\in B_{\ell_1}$ and let $\phi:W_\alpha\rightarrow c_0$ be defined by
{
%{\small\z}

\medmuskip=0mu
\thinmuskip=0mu
\thickmuskip=0mu

\z

\medmuskip=-2mu
\thinmuskip=-2mu
\thickmuskip=-2mu
\nulldelimiterspace=-1pt
\scriptspace=0pt
}
 
\noindent It is easy to check that $\phi$ is an onto isomorphism. Moreover, if $\alpha \in S_{\ell_1}$, then it holds
\begin{equation*}
\|\phi\|\,\|\phi^{-1}\| = 3
\end{equation*}
and the space $W_\alpha$ contains an isometric copy of $c$ (see Proposition 2.1 in \cite{Casini-Miglierina-Piasecki2015}). Hence, Theorem 2.1 in \cite{Gordon1970} implies that $ \|\varphi\|\,\|\varphi^{-1}\| \geq 3$ for every isomorphism $\varphi$ from $W_\alpha$ onto $c_0$. Therefore, $d(W_\alpha,c_0)=3$ for every $\alpha\in S_{\ell_1}$ with a finite number of non null components.
Now we pass to consider the general case. Let $\alpha \in S_{\ell_1}$ and $\alpha_n=\left(\alpha(1),\ldots,\alpha(n),\sum_{i=n+1}^{+\infty}\left|\alpha(i)\right|,0,0,\ldots \right)$. Then, clearly $\lim_{n \rightarrow \infty} |\alpha-\alpha_n|_{\ell_1}=0$. By Lemma \ref{approximation_lemma}, we have that $\lim_{n \rightarrow \infty} d(W_\alpha,W_{\alpha_n})=1$. Therefore, %we obtain
%$ %$
%d(c_0,W_\alpha)\leq \lim_{n \rightarrow \infty} d(c_0,W_{\alpha_n})d(W_\alpha,W_{\alpha_n})=3
%$$
%and
%$$     
%3=d(c_0,W_{\alpha_n})\leq \lim_{n \rightarrow \infty} %d(c_0,W_{\alpha})d(W_\alpha,W_{\alpha_n})=d(c_0,W_\alpha).
%$$
we conclude that $d(c_0,W_\alpha)=3$.
\end{proof} 
We are now in position to state the example announced above.
\begin{exa}
Let $\alpha = \left( -\frac{1}{2}, -\frac{1}{4}, -\frac{1}{8}, \cdots\right) \in S_{\ell_1}$. The space $W_\alpha$ is a predual of $\ell_1$ such that $d(c_0,W_\alpha)=3$ by Proposition \ref{prop_dist_3}. Moreover,
$\ell_1$ has the $\sigma(\ell_1,W_\alpha)$-fpp by Proposition 2.2 in \cite{Casini-Miglierina-Piasecki2015}. 
\end{exa}
By following the same arguments as in the proof of Proposition \ref{prop_dist_3} we obtain an upper bound for the estimation of the Banach-Mazur distance between one of the considered hyperplane and $c_0$. More precisely, we have
$$
d(W_\alpha,c_0)\leq1+2 |\alpha|_{\ell_1}
$$
for every $\alpha \in B_{\ell_1}$. The last inequality, combined with Theorem \ref{main Thm gamma stability}, gives the following characterization of stability of the $\sigma(\ell_1,W_\alpha)$-fpp in the sense of the constant $\gamma^*(X)$.
\begin{cor}
Let $\alpha \in B_{\ell_1}$ be such that $\sigma(\ell_1,W_\alpha)$-fpp holds. Then $\gamma^*(W_\alpha)>1$ if and only if $d(W_\alpha,c_0)<3$.
\end{cor}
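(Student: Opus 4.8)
The plan is to reduce the whole statement to the single identity $r^*(W_\alpha)=\left|\alpha\right|_{\ell_1}$ and then feed it into Theorem \ref{main Thm gamma stability} together with the distance facts already at our disposal. Since we are assuming the $\sigma(\ell_1,W_\alpha)$-fpp, Theorem \ref{main Thm gamma stability} gives $\gamma^*(W_\alpha)=\frac{2}{1+r^*(W_\alpha)}$, so $\gamma^*(W_\alpha)>1$ is equivalent to $r^*(W_\alpha)<1$. Hence everything comes down to relating $r^*(W_\alpha)$ and $d(W_\alpha,c_0)$ to the single scalar $\left|\alpha\right|_{\ell_1}$.

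The first real step is to compute $r^*(W_\alpha)$. The extreme points of $B_{\ell_1}$ are the vectors $\pm e_k^*$, where $(e_k^*)$ is the standard basis of $\ell_1$. Using the duality $\langle x,f\rangle=\sum_j x(j)f(j)$ between $W_\alpha$ and $\ell_1$ (Theorem 4.3 in \cite{Casini-Miglierina-Piasecki2014}), for every $x\in W_\alpha\subseteq c$ we have $\langle x,e_k^*\rangle=x(k)\to\lim_i x(i)=\sum_i\alpha(i)x(i)=\langle x,\alpha\rangle$, so $e_k^*\to\alpha$ and $-e_k^*\to-\alpha$ in the $\sigma(\ell_1,W_\alpha)$ topology; thus $\pm\alpha\in(\textrm{ext}(B_{\ell_1}))'$ and $r^*(W_\alpha)\ge\left|\alpha\right|_{\ell_1}$. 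For the reverse inequality I would argue that these are the only cluster points: since $W_\alpha$ is separable, $\sigma(\ell_1,W_\alpha)$ is metrizable on $B_{\ell_1}$, so any $g\in(\textrm{ext}(B_{\ell_1}))'$ is a limit of a sequence $(\varepsilon_j e_{k_j}^*)$ of pairwise distinct extreme points, which forces $k_j\to\infty$; choosing $x\in W_\alpha$ with $\langle x,\alpha\rangle\neq 0$ (possible whenever $\alpha\neq0$) and testing against it shows the signs $\varepsilon_j$ must eventually stabilize, so $g=\pm\alpha$. Consequently $(\textrm{ext}(B_{\ell_1}))'=\{\alpha,-\alpha\}$ (and $\{0\}$ when $\alpha=0$), giving $r^*(W_\alpha)=\left|\alpha\right|_{\ell_1}$ in all cases.

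It then remains to assemble the pieces, distinguishing the two exhaustive cases for $\alpha\in B_{\ell_1}$. If $\left|\alpha\right|_{\ell_1}<1$, then $r^*(W_\alpha)<1$, so $\gamma^*(W_\alpha)>1$, and at the same time $d(W_\alpha,c_0)\le 1+2\left|\alpha\right|_{\ell_1}<3$ by the estimate established just above the corollary. If $\left|\alpha\right|_{\ell_1}=1$, then $r^*(W_\alpha)=1$, so $\gamma^*(W_\alpha)=1$ (equivalently $\gamma^*(W_\alpha)\not>1$), while $d(W_\alpha,c_0)=3$ by Proposition \ref{prop_dist_3}. Putting the two cases together yields $\gamma^*(W_\alpha)>1\iff\left|\alpha\right|_{\ell_1}<1\iff d(W_\alpha,c_0)<3$, which is the claim.

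The routine parts are pure bookkeeping on top of Theorem \ref{main Thm gamma stability}, Proposition \ref{prop_dist_3}, and the inequality $d(W_\alpha,c_0)\le 1+2\left|\alpha\right|_{\ell_1}$. The one place where a little care is needed is the computation of $r^*(W_\alpha)$, and specifically the upper bound $r^*(W_\alpha)\le\left|\alpha\right|_{\ell_1}$, i.e. excluding spurious $\sigma(\ell_1,W_\alpha)$-cluster points of $\textrm{ext}(B_{\ell_1})$; this is the main (though modest) obstacle, and it can alternatively be quoted from the analysis of the spaces $W_\alpha$ in \cite{CMPP4} if a reference is preferred to the sign-stabilization argument above.
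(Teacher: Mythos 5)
Your proposal is correct and follows essentially the same route the paper intends: combine $\gamma^*(W_\alpha)=\frac{2}{1+r^*(W_\alpha)}$ from Theorem \ref{main Thm gamma stability} with the bound $d(W_\alpha,c_0)\leq 1+2\left|\alpha\right|_{\ell_1}$ and Proposition \ref{prop_dist_3}. The only difference is that you spell out the identification $r^*(W_\alpha)=\left|\alpha\right|_{\ell_1}$ (via the $\sigma(\ell_1,W_\alpha)$-convergence $\pm e_k^*\to\pm\alpha$ and the sign-stabilization argument excluding other cluster points), which the paper leaves implicit; your argument for it is sound.
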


\end{document}